\documentclass[dvipsnames,12pt]{amsart}
\usepackage[margin=1.3in]{geometry}
\usepackage[OT2,T1]{fontenc}
\usepackage[utf8]{inputenc}
\usepackage[dvipsnames]{xcolor}
\usepackage{graphicx}
\usepackage{tikz-cd}
\usepackage{stmaryrd}
\usepackage[pagebackref]{hyperref}
\hypersetup{
  colorlinks=true,
    linkcolor=purple,
    filecolor=blue,      
    urlcolor=blue,
    citecolor=ForestGreen,
    linktoc=page}

\SetSymbolFont{stmry}{bold}{U}{stmry}{m}{n}
\usepackage[scr=rsfs,bb=pazo]{mathalfa}
\usepackage[alphabetic]{amsrefs}
\usepackage{microtype,amsmath,amsfonts,amssymb,amsthm,mathrsfs,bm}
\setcounter{MaxMatrixCols}{20}

\theoremstyle{plain}
\newtheorem{theorem}[]{Theorem}
\newtheorem{theorema}{Theorem}

\newtheorem{proposition}[theorem]{Proposition}
\newtheorem{lemma}[theorem]{Lemma}

\newtheorem{corollary}[theorem]{Corollary}
\newtheorem*{conjecture*}{Conjecture}
\newtheorem*{assumption*}{Assumption}

\theoremstyle{definition}
\newtheorem{definition}[theorem]{Definition}
\newtheorem{instance}[theorem]{Example}
\newtheorem{situation}[theorem]{}
\newtheorem{remark}[theorem]{Remark}

\usepackage{enumitem}
\numberwithin{theorem}{section}
\numberwithin{equation}{section}
\setlist[itemize]{leftmargin=2.5em}

\setcounter{tocdepth}{1}
\setcounter{section}{-1}

\linespread{1.08}

\title[A note on periods of Calabi--Yau FCI]{A note on periods of Calabi--Yau 
fractional complete intersections}
\date{\today}

\author[Tsung-Ju~Lee]{Tsung-Ju Lee}
\email{tjlee@cmsa.fas.harvard.edu}
\address{Center of Mathematical Sciences and Applications, 
20 Garden St., Cambridge, MA 02138.}
\thanks{The work is partially supported by 
the AMS--Simons travel grant}

\begin{document}
\begin{abstract}
We prove that the GKZ \(\mathscr{D}\)-module \(\mathcal{M}_{A}^{\beta}\)
arising from Calabi--Yau fractional complete intersections 
in toric varieties is complete, i.e.,~all the solutions to 
\(\mathcal{M}_{A}^{\beta}\) are period integrals.
This particularly implies that \(\mathcal{M}_{A}^{\beta}\)
is equivalent to the Picard--Fuchs system. As an application,
we give explicit formulae of the period integrals of
Calabi--Yau threefolds coming from double covers
of \(\mathbf{P}^{3}\) branch over eight hyperplanes in general position.
\end{abstract}
\maketitle
\tableofcontents

\section{Introduction}
Mirror symmetry from physics has successfully made numerous
predictions in enumerative geometry and led to many deep conjectures 
in various branches of mathematics. Roughly, mirror symmetry predicts 
for a Calabi--Yau manifold \(Y\) there is a Calabi--Yau manifold \(Y^{\vee}\)
such \(A(Y)\cong B(Y^{\vee})\) and \(B(Y)\cong A(Y^{\vee})\).
Here \(A(Y)\), the \(A\) model of \(Y\), is taken to be
the genus zero Gromov--Witten theory and \(B(Y)\), the 
\(B\) model of \(Y\), is the complex deformation theory of \(Y\).

One can study the \(B\) model via period integrals. 
It is an old strategy to study periods by firstly
investigating the PDE system they obey, called 
the Picard--Fuchs system, then computing the solutions to the Picard--Fuchs system,
and ultimately trying to restore the periods from the solutions. However, it
is difficult to write down all equations (or even a set of generators)
in the Picard--Fuchs system. Nevertheless,
in the case of Calabi--Yau hypersurfaces or complete
intersections in toric varieties, Batyrev in \cite{1993-Batyrev-variations-of-the-mixed-hodge-
structure-of-affine-hypersurfaces-in-algebraic-tori} observed that their period integrals
satisfy a certain type of GKZ systems,
introduced by Gel'fand, Graev, Kapranov, and Zelevinsky
\cites{1989-Gelfand-Kapranov-Zelevinski-hypergeometric-functions-and-toral-manifolds,
1987-Gelfand-Graev-Zelevinski-holonomic-systems-of-equations-and-series-of-hypergeometric-type}:
the input of GKZ systems constitutes a matrix 
\(A\in\mathrm{Mat}_{d\times m}(\mathbb{Z})\) and an exponent vector \(\beta\in\mathbb{C}^{d}\)
and the output is a PDE system \(\mathcal{M}_{A}^{\beta}\)
on \(\mathbb{C}^{m}\) (cf.~Definition \ref{def:gkz-system}).
Consequently, in the present case,
the GKZ system is a subsystem of the Picard--Fuchs system
but it is far away from being full in general (we say that
the GKZ system is \emph{incomplete}).
To see this, note that the GKZ system \(\mathcal{M}_{A}^{\beta}\) here
is a regular holonomic \(\mathscr{D}\)-module 
whose holonomic rank is equal to the normalized volume of 
a polytope determined by \(A\) and
the said volume is greater than the expected dimension in
most cases.
In fact, the result by Huang \textit{et al}.~in
\cite{2016-Huang-Lian-Yau-Zhu-chain-integral-solutions-to-tautological-systems}
implies that for Calabi--Yau hypersurfaces in Fano toric manifolds
the set of solutions to \(\mathcal{M}_{A}^{\beta}\) at
any point can be identified with certain
relative homology group
and later the author and Zhang generalized 
this result to arbitrary complete intersections
\cite{2020-Lee-Zhang-a-hypergeometric-systems-and-relative-cohomology}.
Finally, there are several recipes to restore period integrals. 
Inspired by mirror symmetry,
Hosono \textit{et al}.~proposed a mechanism, also known as 
the \emph{hyperplane conjecture}, to characterize the honest period
integrals among the solutions to 
\(\mathcal{M}_{A}^{\beta}\)
\cite{1996-Hosono-Lian-Yau-gkz-generalized-hypergeometric-
systems-in-mirror-symmetry-of-calabi-yau-hypersurfaces}.
Another way is to enlarge the incomplete PDE system
by adding additional operators from the symmetries of the
ambient variety. In this
direction, Hosono \textit{et al}.~introduced the extended
GKZ system to tackle the case of Calabi--Yau hypersurfaces in toric varieites
\cite{1996-Hosono-Lian-Yau-gkz-generalized-hypergeometric-systems-in-mirror-symmetry-of-calabi-yau-hypersurfaces} and later Lian \textit{et al}.~introduced
tautological systems by bringing this idea to general manifolds with
a Lie group action \cite{2013-Lian-Song-Yau-periodic-integrals-and-tautological-systems}.
Using tautological systems,
Lian \textit{et al}.~were able to prove the hyperplane conjecture
when the ambient toric variety is \(\mathbf{P}^{n}\)
\cite{2021-Lian-Zhu-on-the-hyperplane-conjecture-for-periods-of-calabi-yau-hypersurfaces-in-pn}.

The recent work of Hosono \textit{et~al}.~shed light on mirror symmetry for
\emph{singular} Calabi--Yau varieties 
\cites{2020-Hosono-Lian-Takagi-Yau-k3-surfaces-from-configurations-of-six-lines-in-p2-and-mirror-symmetry-i,2019-Hosono-Lian-Yau-k3-surfaces-from-configurations-of-six-lines-in-p2-and-mirror-symmetry-ii-lambda-k3-functions}: they
investigated singular \(K3\) surfaces arising from 
configurations of six lines in general position on \(\mathbf{P}^{2}\) 
and found their lattice-theoretical mirror.
Based on their results, 
in \cite{2020-Hosono-Lee-Lian-Yau-mirror-symmetry-for-double-cover-calabi-yau-varieties},
we constructed pairs \((Y,Y^{\vee})\) of 
singular Calabi--Yau varieties 
from double covers of
toric manifolds branch over a divisor with
strictly normal crossings
and showed that \((Y,Y^{\vee})\) are
topological mirror pairs when the dimension is less than \(5\).
In \(3\)-fold cases, we also studied explicit examples and
verified that \(A(Y)\cong B(Y^{\vee})\). In the present case,
\(A(Y)\) is the untwisted part of the 
genus zero orbifold Gromov--Witten theory and \(B(Y^{\vee})\)
is the complex deformation of \(Y^{\vee}\)
coming from particular deformations of the branch locus.
Given these, it is natural to study the
family of such a Calabi--Yau double cover.

It is known that 
the period integrals of the family of Calabi--Yau double covers 
coming from a nef-partition
also satisfy a GKZ system
with \(A\) being the matrix associated to the dual nef-partition but 
the exponent \(\beta\) being \emph{fractional} only.
Owing to the combinatorial nature of \(A\), 
one can extend classical techniques for 
Calabi--Yau complete intersections in toric varieties to
this new class of singular Calabi--Yau varieties.
Because of the striking similarity,
such a Calabi--Yau is called a
\emph{fractional complete intersection} in
\cite{2022-Lee-Lian-Yau-on-calabi-yau-fractional-complete-intersections}.

In this paper, we investigate the \(B\) model of Calabi--Yau
fractional complete intersections. We
prove that the GKZ system associated with 
a family of Calabi--Yau fractional complete intersections is equivalent to 
the Picard--Fuchs system. 
\begin{theorema}[=Theorem \ref{thm:main-theorem}]
\label{thm:theorem-a}
Let \(\mathcal{M}_{A}^{\beta}\) be the GKZ system
associated with the family of Calabi--Yau double covers
coming from a nef-partition. Then \(\mathcal{M}_{A}^{\beta}\)
is complete, i.e.,~the (classical) solutions
to \(\mathcal{M}_{A}^{\beta}\) are precisely the period integrals.
\end{theorema}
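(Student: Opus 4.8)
The plan is to reduce the statement to a dimension count. It is known (and follows by differentiating under the integral sign the fractional volume form attached to the family, cf.~\cite{2022-Lee-Lian-Yau-on-calabi-yau-fractional-complete-intersections}) that every period integral of the Calabi--Yau double cover is annihilated by the GKZ operators; hence near a base point the \(\mathbb{C}\)-span \(\mathrm{Per}\) of the period integrals is a subspace of the solution space \(\mathrm{Sol}(\mathcal{M}_{A}^{\beta})\) that is stable under the monodromy of the family. On the other hand \(\mathcal{M}_{A}^{\beta}\) is regular holonomic, and its holonomic rank equals the normalized volume of the polytope attached to \(A\) --- here one uses that the matrix \(A\) coming from the dual nef-partition defines a Cohen--Macaulay affine semigroup ring, so that the holonomic rank equals the normalized volume for every \(\beta\) (Gel'fand--Kapranov--Zelevinsky, Adolphson). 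It therefore suffices to show that \(\dim_{\mathbb{C}}\mathrm{Per}\) equals this normalized volume.

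To compute both sides at once I would use the description of \(\mathrm{Sol}(\mathcal{M}_{A}^{\beta})\) by twisted relative (co)homology. For integral \(\beta\), the results of Huang--Lian--Yau--Zhu and their extension by the author and Zhang \cites{2016-Huang-Lian-Yau-Zhu-chain-integral-solutions-to-tautological-systems,2020-Lee-Zhang-a-hypergeometric-systems-and-relative-cohomology} identify the local solution space of the GKZ system of a complete intersection with a relative homology group of the complement \(T\setminus Z_{f}\) of a hypersurface in an algebraic torus \(T\), the isomorphism being realized by integrating a natural rational volume form. The one new feature here is that the fractional exponent \(\beta\) turns the constant coefficients into a genuine rank-one local system \(\mathcal{L}_{\beta}\) on \(T\), and this \(\mathcal{L}_{\beta}\) is exactly the half-integer local system whose associated double cover is \(Y\). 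I would check that the argument of \cite{2020-Lee-Zhang-a-hypergeometric-systems-and-relative-cohomology} --- the Cayley trick reducing a complete intersection to a hypersurface, followed by a twisted de Rham (Koszul-complex) computation --- goes through with \(\mathcal{L}_{\beta}\)-coefficients, producing a monodromy-equivariant isomorphism between \(\mathrm{Sol}(\mathcal{M}_{A}^{\beta})\) and the dual of a relative homology group \(H_{\bullet}(T\setminus Z_{f},\partial;\mathcal{L}_{\beta})\), under which pairing against a twisted cycle is integration of the fractional rational form \(\Omega/\sqrt{f}\).

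It then remains to identify each twisted cycle integral with an honest period of \(Y\): along the double cover \(\pi\colon Y\to X\), the fractional form \(\Omega/\sqrt{f}\) pushes forward to (a component of) the holomorphic volume form of \(Y\), and a twisted cycle on \(T\setminus Z_{f}\) with \(\mathcal{L}_{\beta}\)-coefficients lifts to a genuine cycle on \(Y\); so each pairing computes an integral \(\int_{\gamma}\Omega_{Y}\), that is, an element of \(\mathrm{Per}\). Hence \(\mathrm{Per}\) exhausts the image of \(H_{\bullet}(T\setminus Z_{f},\partial;\mathcal{L}_{\beta})^{\vee}\), and to conclude \(\mathrm{Per}=\mathrm{Sol}(\mathcal{M}_{A}^{\beta})\) one needs that this image is everything, equivalently that the twisted de Rham cohomology is spanned by forms of the above shape. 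As in the integral complete-intersection case this reduces to the degeneration of the spectral sequence computing \(H^{\bullet}(T\setminus Z_{f};\mathcal{L}_{\beta}^{\vee})\), i.e.~to the vanishing of the higher cohomology of the twisted Koszul-type complex built from \(f\), which follows from vanishing theorems on the ambient toric variety together with the Cohen--Macaulayness of \(\mathbb{C}[A]\). Finally, the natural surjection \(\mathcal{M}_{A}^{\beta}\twoheadrightarrow \mathcal{M}_{\mathrm{PF}}\) onto the Picard--Fuchs \(\mathscr{D}\)-module now has the same solution complex on both sides, so the Riemann--Hilbert correspondence forces its kernel to vanish and the two systems to coincide.

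The step I expect to be the main obstacle is the homological input of the third paragraph --- ruling out ``spurious'' cohomology classes, hence solutions outside \(\mathrm{Per}\). Two points need care. First, \(Y\) is singular, so ``period integral'' must be read on a crepant resolution (or in orbifold cohomology), and one has to check that the fractional rational form represents the right class there and that the twisted cycles capture the full variable part of the Hodge structure rather than a proper sub-structure. Second, the Calabi--Yau normalization places \(\beta\) on certain resonance hyperplanes where the naive ``generic \(\beta\)'' completeness argument fails; the point is that the shift by the half-integer part of \(\beta\) moves the exponent off precisely the walls responsible for the extra solutions in the classical hypersurface case, and it is this that makes the present GKZ system complete. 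Once these two points are in place, the remainder is bookkeeping with the Cayley trick and the torus symmetries.
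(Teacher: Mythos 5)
Your overall strategy (periods are solutions; then match dimensions by identifying the solution space with a twisted homology group realized by Euler-type integrals) is workable in spirit, but the decisive step is only asserted, not proved: you say that ``the shift by the half-integer part of \(\beta\) moves the exponent off precisely the walls responsible for the extra solutions.'' That statement is exactly the non-resonance of \(\beta\) with respect to \(A\), and it is the entire mathematical content of the theorem; it is not a genericity statement and it genuinely depends on the nef-partition structure (for general cyclic covers of toric varieties the analogous \(\beta\) can be resonant, so no soft argument can give it). The paper proves it combinatorially: using the convexity of the support functions \(\varphi_{i}\) of the nef divisors \(E_{i}\) one shows \(\mathbb{R}_{+}A=\bigcup_{\sigma\in\Sigma(n)}\hat{\sigma}\), hence every facet of \(\mathbb{R}_{+}A\) is cut out by a functional of the form \((e_{j},m_{\sigma}^{j})\) where \(m_{\sigma}^{j}\in M\) is the (integral) Cartier datum of \(E_{j}\) on \(\sigma\); pairing such a functional with \(\beta\) gives \(1/2\), which can never lie in \(\mathbb{Z}\), so \(\beta\notin\mathbb{C}F+\mathbb{Z}^{r+n}\) for every facet \(F\). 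Your proposal contains no substitute for this facet analysis, so as written it has a gap at its central point.

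Moreover, once non-resonance is established, essentially all of the heavy machinery you propose to redevelop is unnecessary: the theorem of Gel'fand--Kapranov--Zelevinsky on generalized Euler integrals (quoted as Theorem \ref{thm:gkz-non-resonant} in the paper) already gives, for non-resonant \(\beta\), that the chain-integral map \(\mathrm{H}_{n}(U_{x},\mathscr{E}_{x})\to\operatorname{Sol}^{0}(\mathcal{M}_{A}^{\beta})_{x}\) is an isomorphism at every \(x\); what remains is only the (comparatively soft) identification of \(\mathrm{H}_{n}(U_{x},\mathscr{E}_{x})\) with the variable part of the homology of the double cover via the eigensheaf decomposition of \(R\pi_{*}\mathbb{C}\), which the paper carries out in \S\ref{sec:gkz}. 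Conversely, the steps you leave unverified --- the twisted Cayley trick, degeneration of the twisted de Rham spectral sequence, the vanishing needed to rule out spurious classes, and the Cohen--Macaulayness of \(\mathbb{C}[A]\) used for the rank count --- would themselves require non-resonance (they fail for resonant exponents), so the gap cannot be circumvented along the route you sketch; it must be closed by an argument like the paper's explicit description of the facets of \(\mathbb{R}_{+}A\).
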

See \S\ref{subsec:cy-fci}
for the explicit construction of the Calabi--Yau double covers
from a nef-partition and \S\ref{sit:notation} for the definition of \(A\) and \(\beta\)
in the present situation.

Theorem \ref{thm:theorem-a} says there is no need
to enlarge the GKZ system although the base toric variety might
have non-torus automorphisms. 
This is the case since we only consider particular complex deformations;
the branch locus of our Calabi--Yau double cover is a divisor whose support is
a union of toric divisors and some numerically effective divisors
and we deform those numerically effective divisors only. 
The infinitesimal automorphisms fixing the union of
toric divisor are all from the maximal torus.
This gives a conceptual explanation of Theorem \ref{thm:theorem-a}.

Combined with the results in 
\cite{2022-Lee-Lian-Yau-on-calabi-yau-fractional-complete-intersections}, we
obtain all period integrals via the generalized Frobenius method 
(cf.~Corollary \ref{cor:periods-power-series}).
As an application, we give an explicit formula for
the period integrals of the family of Calabi--Yau threefolds coming
from double covers of \(\mathbf{P}^{3}\) branch over eight hyperplanes
in general position (cf.~Example~\ref{ex:p3-cover}).

Our proof of Theorem \ref{thm:theorem-a}
heavily relies on a classical result of
Gel'fand, Kapranov, and Zelevinsky.
Indeed, in the case of Calabi--Yau double covers coming
from a nef-partition, the associated GKZ system \(\mathcal{M}_{A}^{\beta}\) 
is rather special: the exponent \(\beta\) is always 
\emph{non-resonant}
with respect to \(A\) (cf.~Definition 
\ref{def:non-resonant}) and 
Theorem \ref{thm:theorem-a} follows from the main result in
\cite{1990-Gelfand-Kapranov-Zelevinsky-generalized-
euler-integrals-and-a-hypergeometric-functions}. 
The proof of the non-resonance of \(\beta\) relies on 
the combinatorial structure of \(A\) which boils down to
the convexity of the support function of numerically effective toric divisors.

We remark that for general cyclic covers of toric varieties, 
not necessarily coming from a nef-partition, 
the exponent \(\beta\) might be resonant. We will deal 
with the case when \(\beta\) is \emph{semi-nonresonant} in a forthcoming paper
\cite{2022-Lee-Zhang-a-hypergeometric-systems-and-relative-cohomology-ii}. 

\subsection*{Acknowledgments}
The author thanks 
Bong H.~Lian, Hui-Wen Lin, Chin-Lung Wang, 
and Shing-Tung~Yau for their constant encouragement,
their interests in this work, and providing him many useful comments.
He thanks Dingxin Zhang for many
valuable discussions. 
He also would like to thank 
CMSA at Harvard for hospitality while working on this project.

\section{Calabi--Yau fractional complete intersections}
\subsection{Toric varieties}
Let \(N=\mathbb{Z}^{n}\) be a rank \(n\) lattice
and \(M:=\operatorname{Hom}_{\mathbb{Z}}(N,\mathbb{Z})\) be the dual lattice.
Denote by \(\langle -,-\rangle\) the 
canonical dual pairing between \(M\) and \(N\).
Recall that a \emph{fan} \(\Sigma\) in \(N_{\mathbb{R}}:=N\otimes_{\mathbb{Z}}\mathbb{R}\)
is a collection of strongly convex rational polyhedral cones in \(N_{\mathbb{R}}\)
such that
\begin{itemize}
\item if \(\sigma\in \Sigma\), then all the faces of \(\sigma\) belong to \(\Sigma\);
\item the intersection of two cones is a face of each.
\end{itemize}
The support \(|\Sigma|\) of a fan \(\Sigma\) is the union of all 
the cones belonging to \(\Sigma\). A fan \(\Sigma\) is called
\emph{proper} if \(|\Sigma|=N_{\mathbb{R}}\). A fan \(\Sigma\)
is called \emph{simplicial} if every cone in \(\Sigma\) is simplicial, i.e.,~
generated by a \(\mathbb{Q}\) linearly independent subset.
The toric variety
defined by \(\Sigma\) is denoted by \(X_{\Sigma}\).

Recall that a \emph{polyhedron} is a finite intersection of
closed half-spaces in an Euclidean space.
A polyhedron is called a \emph{polytope} if it is bounded.
A \emph{lattice polytope}
is a polytope whose vertices are integral. Let \(\Delta\) be a polytope in 
\(\mathbb{R}^{n}\).
The dimension of \(\Delta\) is the smallest integer \(d\) such that 
\(\Delta\subset\mathbb{R}^{d}\). \(\Delta\) is called full-dimensional 
if \(d=n\). If \(\Delta\) is a full-dimensional lattice
polytope in \(M_{\mathbb{R}}:=M\otimes_{\mathbb{Z}}\mathbb{R}\)
and \(\mathbf{0}\) is its interior point, we can define the 
\emph{dual polytope} or the \emph{polar polytope} 
\begin{equation*}
\Delta^{\vee}:=\{u\in N_{\mathbb{R}}~|~\langle m,u\rangle \ge -1~\mbox{for all}~m\in \Delta\}.
\end{equation*}
A lattice polytope \(\Delta\) containing \(\mathbf{0}\) in its interior is called 
a \emph{reflexive polytope} if \(\Delta^{\vee}\) is again a lattice polytope.
One can check that \((\Delta^{\vee})^{\vee}=\Delta\) for
\(\Delta\) being reflexive.

Given a lattice polytope \(\Delta\subset M_{\mathbb{R}}\), one
can define the \emph{normal fan} \(\mathcal{N}(\Delta)\) of \(\Delta\). If 
\(\mathbf{0}\in\operatorname{int}(\Delta)\), one can define
the \emph{face fan} \(\mathcal{F}(\Delta)\) of \(\Delta\).
If \(\Delta\) is a reflexive polytope, one can easily check 
\begin{equation*}
\mathcal{N}(\Delta) = \mathcal{F}(\Delta^{\vee})~\mbox{and}~
\mathcal{N}(\Delta^{\vee}) = \mathcal{F}(\Delta).
\end{equation*}

If \(\Delta\subset M_{\mathbb{R}}\)
a lattice polytope, we denote by \(\mathbf{P}_{\Delta}\) the toric
variety defined by \(\mathcal{N}(\Delta)\). According to 
our notation, we have \(\mathbf{P}_{\Delta}=X_{\mathcal{N}(\Delta)}\).

Let \(\Sigma\) be a fan in \(N_{\mathbb{R}}\).
Denote by \(\Sigma(k)\) the set of \(k\)-dimensional cones in \(\Sigma\).
Each \(\rho\in\Sigma(1)\) determines a torus invariant divisor \(D_{\rho}\) on \(X\).
By abuse of notation, the same notation \(\rho\) also
stands for the primitive generator of the \(1\)-cone \(\rho\)
and \(\Sigma(1)\) is also regarded as the set of primitive generators
of \(1\)-cones in \(\Sigma\). Thus the notation \(\rho\in\Sigma(1)\) has two meanings but
we will not explicitly spell them out when the context is clear.
For a torus invariant divisor \(D=\sum_{\rho\in\Sigma(1)} a_{\rho}D_{\rho}\), 
we define the \emph{divisor polyhedron}
\begin{equation*}
\Delta_{D}:=\{m\in M_{\mathbb{R}}~|~\langle m,\rho\rangle\ge 
-a_{\rho}~\mbox{for all}~\rho\in\Sigma(1)\}.
\end{equation*}
Note that \(\Delta_{D}\) is a polytope when \(\Sigma\) is proper.
In which case \(\Delta_{D}\) is called the divisor polytope
and the set of integral points in \(\Delta_{D}\) gives a canonical basis of
\(\mathrm{H}^{0}(X,\mathscr{O}_{X}(D))\).
If moreover \(D\) is Cartier, for \(\sigma\in\Sigma(n)\), one can find uniquely an
\(m_{\sigma}\in M\) such that
\begin{equation*}
\langle m_{\sigma},\rho\rangle =-a_{\rho}~\mbox{if}~\rho\in 
\Sigma(1)~\mbox{and}~\rho\subset\sigma.
\end{equation*}
Suppose that every maximal cone in \(\Sigma\) is \(n\)-dimensional. 
Then a Cartier divisor \(D\) determines a collection 
\(\{m_{\sigma}\}_{\sigma\in\Sigma(n)}\) which is called the \emph{Cartier data} of \(D\).
One can define the \emph{support function of \(D\)}
\begin{equation*}
\varphi_{D}\colon |\Sigma|\to \mathbb{R}~\mbox{via}~
\left.\varphi_{D}\right|_{\sigma}(u):=\langle m_{\sigma},u\rangle.
\end{equation*}
Now assume further that \(|\Sigma|\) is convex. 
It is known that
\begin{equation*}
\mbox{\(D\) is basepoint free}~\Leftrightarrow~
\mbox{\(D\) is numerically effective}~\Leftrightarrow~
\mbox{\(\varphi_{D}\) is convex}.
\end{equation*}

\subsection{Batyrev--Borisov's duality construction}
Let \(\Delta\subset M_{\mathbb{R}}\) be a reflexive polytope 
and \(\mathcal{N}(\Delta)\) be the normal fan of \(\Delta\).
\begin{definition}
A partition \(I_{1}\sqcup\cdots \sqcup I_{r}=\mathcal{N}(\Delta)(1)\) is called
a \emph{nef-partition} if for each \(1\le j\le r\) the divisor
\begin{equation*}
E_{j}:=\sum_{\rho\in I_{j}} D_{\rho}
\end{equation*}
is Cartier and numerically effective on \(\mathbf{P}_{\Delta}\).
Note that \(E_{1}+\cdots+E_{r}=-K_{\mathbf{P}_{\Delta}}\).
A nef-partition also induces 
a decomposition 
\begin{equation*}
\Delta = \Delta_{1}+\cdots+\Delta_{r}~\mbox{where}~\Delta_{i}:=\Delta_{E_{i}}.
\end{equation*}
By abuse of the terminology, both
\(E_{1}+\cdots+E_{r}=-K_{\mathbf{P}_{\Delta}}\) and \(\Delta=\Delta_{1}+\cdots+\Delta_{r}\)
are also called nef-partitions.
\end{definition}

A nef-partition \(I_{1}\sqcup\cdots \sqcup I_{r}=\mathcal{N}(\Delta)(1)\) 
gives rise to polytopes
\begin{equation*}
\nabla_{j}:=\operatorname{Conv}(I_{j}\cup\{\mathbf{0}\}).
\end{equation*}
By definition, we have
\(\Delta^{\vee} = \operatorname{Conv}(\nabla_{1},\ldots,\nabla_{r})\).
A fundamental result in \cite{1993-Borisov-towards-the-mirror-
symmetry-for-calabi-yau-complete-intersections-in-gorenstein-toric-fano-varieties} 
states that their Minkowski sum
\begin{equation}
\label{eq:dual-nef-partition}
\nabla = \nabla_{1}+\cdots+\nabla_{r}
\end{equation}
is again a reflexive polytope and
\eqref{eq:dual-nef-partition} induces a nef-partition. This is
called the \emph{dual nef-partition} in 
\cite{1996-Batyrev-Borisov-on-calabi-yau-complete-intersections-in-toric-varieties}.

\subsection{Calabi--Yau fractional complete intersections}
\label{subsec:cy-fci}
Let \(\Delta\subset M_{\mathbb{R}}\) be a reflexive polytope.
There exists a maximal projective
crepant partial desingularization
(MPCP desingularization for short hereafter) of \(\mathbf{P}_{\Delta}\).
Recall that a MPCP desingularization
of \(\mathbf{P}_{\Delta}\) is a projective toric variety \(X_{\Sigma}\)
such that
\begin{itemize}
\item \(\Sigma\) is a refinement of \(\mathcal{N}(\Delta)\);
\item \(\Sigma\) is simplicial;
\item \(\Sigma(1)=\Delta^{\vee}\cap N\setminus \{\mathbf{0}\}\). 
\end{itemize}
Note that MPCP desingularizations exist and may not be unique. 

Given a nef-partition \(E_{1}+\cdots+E_{r}=-K_{\mathbf{P}_{\Delta}}\)
and a MPCP desingularization \(X_{\Sigma}\to\mathbf{P}_{\Delta}\),
the pullback of the nef-partition is again a nef-partition on \(X_{\Sigma}\).
By abuse of notation, it is also
denoted by \(E_{1}+\cdots+E_{r}=-K_{X_{\Sigma}}\).

Let \(S_{1}\sqcup\cdots\sqcup S_{r}=\mathcal{N}(\nabla)(1)\) be a nef-partition 
representing the dual nef-partition \(\nabla=\nabla_{1}+\cdots+\nabla_{r}\).
Then the divisor polytope of \(F_{j}=\sum_{\rho\in S_{j}} D_{\rho}\) is \(\nabla_{j}\).
By the duality construction, we have 
\begin{equation*}
\Delta_{i} = \operatorname{Conv}(S_{i}\cup\{\mathbf{0}\})~\mbox{and}~
\nabla^{\vee} = \operatorname{Conv}(\Delta_{1},\ldots,\Delta_{r}).
\end{equation*}

We recall the construction of the Calabi--Yau
double covers introduced in \cite{2020-Hosono-Lee-Lian-Yau-mirror-symmetry-for-double-cover-calabi-yau-varieties}.
We make the following assumption.
\begin{assumption*}
Both \(\mathbf{P}_{\Delta}\) and \(\mathbf{P}_{\nabla}\) admit 
a smooth MPCP desingularization.
\end{assumption*}
Let \(X\to\mathbf{P}_{\Delta}\) and \(X^{\vee}\to\mathbf{P}_{\nabla}\)
be smooth MPCP desingularizations. Let
\(E_{1}+\cdots+E_{r}=-K_{X}\) and \(F_{1}+\cdots+F_{r}=-K_{X^{\vee}}\)
be nef-partitions on \(X\) and \(X^{\vee}\) respectively.

Let \(s_{j,1}\in \mathrm{H}^{0}(X^{\vee},\mathscr{O}(F_{j}))\)
be the global section corresponding to \(\mathbf{0}\in\nabla_{j}\)
and \(s_{j,2}\in \mathrm{H}^{0}(X^{\vee},\mathscr{O}(F_{j}))\)
be a general section such that their product
\begin{equation*}
s:=\prod_{j=1}^{r}s_{j,1}s_{j,2}\in
\mathrm{H}^{0}(X^{\vee},\omega_{X^{\vee}}^{-2})
\end{equation*}
is a divisor with strictly normal crossings on \(X^{\vee}\).
Then \(s\) determines a double cover \(Y^{\vee}\) of \(X^{\vee}\)
branch over \(\{s=0\}\). Moreover, \(Y^{\vee}\) is Calabi--Yau, i.e.,~its 
canonical bundle is trivial and \(\mathrm{H}^{i}(Y^{\vee},\mathscr{O}_{Y^{\vee}})=0\)
for \(1\le i\le n-1\). 
\begin{definition}
\label{def:cy-double-cover}
The double cover \(Y^{\vee}\to X^{\vee}\)
is called \emph{the Calabi--Yau double cover
from a nef-partition}, or a
\emph{Calabi--Yau fractional complete intersection}.
\end{definition}
Deforming \(s_{j,2}\) yields a family
of singular Calabi--Yau double covers over \(X^{\vee}\)
\begin{equation*}
\mathcal{Y}^{\vee}\to V\subset 
\mathrm{H}^{0}(X^{\vee},\mathscr{O}(F_{1}))\times\cdots\times
\mathrm{H}^{0}(X^{\vee},\mathscr{O}(F_{r})).
\end{equation*}
We can apply the above construction to \(X\) which gives rise to
another family of singular Calabi--Yau double covers over \(X\)
\begin{equation*}
\mathcal{Y}\to U\subset 
\mathrm{H}^{0}(X,\mathscr{O}(E_{1}))\times\cdots\times
\mathrm{H}^{0}(X,\mathscr{O}(E_{r})).
\end{equation*}
It is conjectured in 
\cite{2020-Hosono-Lee-Lian-Yau-mirror-symmetry-for-double-cover-calabi-yau-varieties} that 
\begin{conjecture*}
\(Y\) and \(Y^{\vee}\) are mirror.
\end{conjecture*}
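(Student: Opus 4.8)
The assertion is the full mirror conjecture for Calabi--Yau fractional complete intersections, so the plan is to separate it into a Hodge-theoretic part and an enumerative part, to read ``\(A(Y)\)'' as the untwisted genus-zero orbifold Gromov--Witten theory of \(Y\) and ``\(B(Y^\vee)\)'' as the variation of Hodge structure of the branch-locus family \(\mathcal{Y}^\vee\to V\) (and symmetrically), and to route both sides through the combinatorics of the dual nef-partition \(\nabla=\nabla_1+\cdots+\nabla_r\) together with Theorem~\ref{thm:theorem-a}. \textbf{Topological mirror symmetry.} First I would upgrade the dimension \(\le 4\) statement of \cite{2020-Hosono-Lee-Lian-Yau-mirror-symmetry-for-double-cover-calabi-yau-varieties} to arbitrary dimension, working with Chen--Ruan (equivalently stringy) Hodge numbers. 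The involution \(\iota\) of the double cover \(Y^\vee\to X^\vee\) splits \(H^{p,q}_{\mathrm{orb}}(Y^\vee)\) into an \(\iota\)-invariant part, computed by Batyrev--Borisov's combinatorial formulae for the crepant toric resolution, and an \(\iota\)-anti-invariant part carried by the branch divisor; the latter I would describe via the (limit) mixed Hodge structure of the family of branch divisors and express \(\dim H^{p,q}_-\) as an alternating sum of lattice-point counts over faces of \(\nabla_1,\dots,\nabla_r\) and \(\Delta_1,\dots,\Delta_r\). Borisov's duality \((\Delta_\bullet)\leftrightarrow(\nabla_\bullet)\) should then turn \(h^{p,q}_{\mathrm{orb}}(Y)=h^{n-p,q}_{\mathrm{orb}}(Y^\vee)\) into a purely combinatorial identity, proved in the manner of \cite{1996-Batyrev-Borisov-on-calabi-yau-complete-intersections-in-toric-varieties} with the extra bookkeeping for the \(\mathbb{Z}/2\)-cover.

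\textbf{The \(B\)-model of \(Y^\vee\).} Here Theorem~\ref{thm:theorem-a} is the engine: the period integrals of \(\mathcal{Y}^\vee\to V\) are precisely the solutions of the complete GKZ system \(\mathcal{M}_A^\beta\) with fractional exponent \(\beta\), so near the large complex structure limit the \(B\)-model VHS is written out explicitly by the generalized Frobenius method (Corollary~\ref{cor:periods-power-series}). From this one reads off the \(B\)-model flat (mirror) coordinate, the normalized period \(1+O(q)\), the Gauss--Manin connection in flat gauge, and the \(B\)-model three-point functions.

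\textbf{The \(A\)-model of \(Y\) and the match.} On the \(A\)-side I would prove an orbifold quantum-Lefschetz / mirror theorem for \(Y\): realize \(Y\to X\) as a partial crepant resolution of the \(\mu_2\)-root stack of \(X\) along the branch divisor, the branch divisor being a ``fractional section'' of \(-K_X\) split according to the nef-partition \(E_1+\cdots+E_r\), and produce a Givental-style \(I\)-function for the untwisted orbifold GW theory of \(Y\) whose \(\Gamma\)-factors carry half-integral shifts, of the form \(\prod_{j,k}\bigl(\tfrac12 H_{E_j}+kz\bigr)\) --- exactly the hypergeometric series attached to \(A\) and the half-integral \(\beta\). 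I would then (i) show this \(I\)-function lies on Givental's Lagrangian cone for \(Y\), via a torus-localization computation on the moduli of stable maps adapted to banded gerbes (or an abstract twisted-theory argument), and (ii) identify \(I_Y\) after the mirror map with the period vector of \(\mathcal{M}_A^\beta\) from the previous step, using that both are pinned down by their leading asymptotics. Matching the two mirror maps and the two systems of three-point functions upgrades the period identification to an isomorphism of Frobenius structures on the untwisted sectors, i.e.~\(A(Y)\cong B(Y^\vee)\); the reverse direction \(B(Y)\cong A(Y^\vee)\) is the same argument with \((\Delta,\nabla)\) interchanged.

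\textbf{Main obstacle.} I expect the topological step and the period identification (part~(ii) above) to be within reach; the genuinely open difficulty is part~(i). There is at present no quantum-Lefschetz principle tailored to these Calabi--Yau double covers, and the naive one does not apply because the relevant stack is a \(\mu_2\)-gerbe/root stack rather than a complete intersection, so the equivariant Euler classes entering localization carry the fractional shifts responsible for the non-integral \(\beta\), and even the convergence and well-definedness of the resulting \(I\)-function is not automatic. A secondary but still substantial issue is the \emph{twisted} sectors of \(H^*_{\mathrm{orb}}(Y)\): a genuine mirror statement must either incorporate them or excise them with justification, and showing that the twisted classes correspond to the obstructed, non-toric deformations of \(Y^\vee\) invisible to the branch-locus family governed by \(\mathcal{M}_A^\beta\) seems to require input beyond the GKZ picture developed here. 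In short, the plan reduces the conjecture to a combinatorial Hodge identity plus an orbifold mirror theorem for \(\mu_2\)-root stacks, the second of which is the crux.
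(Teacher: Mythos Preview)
The paper does not contain a proof of this statement: it is recorded as a \emph{conjecture}, attributed to \cite{2020-Hosono-Lee-Lian-Yau-mirror-symmetry-for-double-cover-calabi-yau-varieties}, and the paper makes no attempt to prove it. The main contribution of the paper is Theorem~\ref{thm:theorem-a} (completeness of \(\mathcal{M}_A^\beta\)), which concerns only the \(B\)-model side and is explicitly offered as a tool for \emph{studying} the conjecture, not as a proof of it. So there is no ``paper's own proof'' against which to compare your proposal.

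Your write-up is honest about this: you label it ``Towards the conjecture,'' you lay out a program rather than a proof, and you correctly flag the orbifold quantum-Lefschetz/mirror theorem for \(\mu_2\)-root stacks and the treatment of twisted sectors as genuine open problems. As a research plan your decomposition (topological mirror symmetry via stringy Hodge numbers, then \(B\)-model via Theorem~\ref{thm:theorem-a} and Corollary~\ref{cor:periods-power-series}, then an \(A\)-model \(I\)-function match) is sensible and is in the spirit of what the paper's introduction sketches as partial evidence in low dimension. But you should be clear that what you have is a strategy with an acknowledged gap at step~(i), not a proof; and since the paper itself leaves the conjecture open, there is nothing further to compare.
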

In this note, we will focus on their \(B\) model, i.e., the families 
\(\mathcal{Y}\to U\) and \(\mathcal{Y}^{\vee}\to V\).

\section{GKZ \texorpdfstring{\(A\)}{A}-hypergeometric systems}
\label{sec:gkz}
We can study the \(B\) model of the family \(\mathcal{Y}^{\vee}\to V\) via period integrals
and it is known that those
period integrals are governed by a certain type of GKZ systems.

To begin, let us recall the definition of GKZ systems
and the notion of non-resonance.
\begin{definition}
\label{def:gkz-system}
Let \(A=(a_{ij})\in\mathrm{Mat}_{d\times m}(\mathbb{Z})\) be an integral matrix
and \(\beta=(\beta_{i})\in\mathbb{C}^{d}\).
The \emph{GKZ system associated with \(A\) and \(\beta\)} 
is the cyclic \(\mathscr{D}\)-module
\begin{equation*}
\mathscr{D}\slash\mathscr{I}
\end{equation*}
where \(\mathscr{D}=\mathbb{C}[x_{1},\ldots,x_{m},\partial_{1},\ldots,\partial_{m}]\) with
\(\partial_{j}\equiv\partial\slash \partial x_{j}\) is the Weyl algebra on an
affine space \(\mathbb{C}^{m}\) with coordinates \(x_{1},\ldots,x_{m}\) and
\(\mathscr{I}\) is the left ideal generated by 
\begin{itemize}
\item \(\partial^{\nu_{+}}-\partial^{\nu_{-}}\), where
\(\nu_{\pm}\in\mathbb{Z}_{\ge 0}^{m}\) such that \(A\nu_{+}=A\nu_{-}\);
\item \(\sum_{j=1}^{m} a_{ij}x_{j}\partial_{j}-\beta_{i}\) for 
\(i=1,\ldots,d\).
\end{itemize}
\end{definition}

Let \(A\in\mathrm{Mat}_{d\times m}(\mathbb{Z})\)
be homogeneous, i.e., the column vectors of \(A\)
are contained in a hyperplane in \(\mathbb{R}^{d}\), and \(\mathbb{R}_{+}A\) 
denote the cone generated by 
columns of \(A\) in \(\mathbb{R}^{d}\). 
Assume that the columns of \(A\)
generate \(\mathbb{Z}^{d}\) as an abelian group.
\begin{definition}
\label{def:non-resonant}
A parameter \(\beta\in\mathbb{C}^{d}\) is called 
\emph{non-resonant with respect to \(A\)} if 
\begin{equation}
\beta\notin \bigcup_{F\subset \mathbb{R}_{+}A} 
\left(\mathbb{C}F+\mathbb{Z}^{d}\right)
\end{equation}
where the union runs through all \emph{proper} faces of \(\mathbb{R}_{+}A\). 
It is also clear that it suffices to take the union over all facets of 
\(\mathbb{R}_{+}A\).
\end{definition}

We will be only interested in the case when \(A\) and \(\beta\)
are of special types. To this end, let us fix the following notation for the
rest of the note.
\begin{situation}{\bf Notation}.
\label{sit:notation}
Suppose we are given the data in \S\ref{subsec:cy-fci}
and let notation be the same as there.
\begin{enumerate}
\item[(1)] Let \(X=X_{\Sigma}\) be a smooth
MPCP desingularization of \(\mathbf{P}_{\Delta}\)
and \(E_{1}+\cdots+E_{r}=-K_{X}\) be the pullback of the nef-partition
\(I_{1}\sqcup\cdots\sqcup I_{r}=\mathcal{N}(\Delta)(1)\).
We denote by \(J_{1}\sqcup\cdots\sqcup J_{r}=\Sigma(1)\) 
the corresponding nef-partition on \(X\). 
Let \(p=|\Sigma(1)|\). 
We write \(J_{k}=\{\rho_{k,1},\ldots,\rho_{k,m_{k}}\}\)
and put \(\rho_{k,0}:=\mathbf{0}\in N\) for each \(1\le k\le r\). 
Then \(p=m_{1}+\cdots+m_{r}\).
\item[(2)] 
Regard \(x_{k,j}\), \(0\le j\le m_{k}\), as coordinate functions on 
the affine space \(W_{k}^{\vee}:=\mathbb{C}^{m_{i}+1}\). 
Denote by \((t_{1},\ldots,t_{n})\)
the coordinate on \(T\subset X^{\vee}\),
the maximal torus of \(X^{\vee}\). Consider 
\begin{equation*}
s_{k,2} = \sum_{j=0}^{m_{k}} x_{k,j} t^{\rho_{k,j}}.
\end{equation*}
For \(x=(s_{1,2},\ldots,s_{r,2})\in 
W_{1}^{\vee}\times\cdots\times W_{r}^{\vee}\), 
we denote by \(U_{x}\) the region \(T\setminus \bigcup_{k=1}^{r} \{s_{k,2}=0\}\).
Let \(\mathscr{E}_{x}\) be the local system
on \(U_{x}\) whose monodromy exponent around \(\{s_{k,2}=0\}\) is 
\(1/2\) for each \(k\).

We will justify the notation later, i.e.,
\(s_{k,2}\) can be regard as an element in 
\(\mathrm{H}^{0}(X^{\vee},\mathscr{O}(F_{k}))\)
(cf.~Corollary \ref{cor:sec-notation}).
\item[(3)] Let \(\{e_{1},\ldots,e_{r}\}\) be the standard basis of \(\mathbb{R}^{r}\).
Set \(\mu_{i,j}:=(e_{i},\rho_{i,j})\in\mathbb{Z}^{r}\times N\).
Define matrices
\begin{equation*}
A_{i}:=
\begin{bmatrix}
\vline height 1ex & & \vline height 1ex \\
\mu_{i,0} & \cdots & \mu_{i,m_{i}}\\
\vline height 1ex & & \vline height 1ex 
\end{bmatrix}\in \mathrm{Mat}_{(r+n)\times (m_{i}+1)}(\mathbb{Z})
\end{equation*}
and 
\begin{equation*}
A:= \begin{bmatrix}
A_{1} & \cdots & A_{r}
\end{bmatrix}\in\mathrm{Mat}_{(r+n)\times (r+p)}(\mathbb{Z}).
\end{equation*}
For convenience, we will label the
columns of \(A\) by a double index \((i,j)\), i.e.,~the
\((i,j)\textsuperscript{th}\) column of \(A\) is
exactly the vector \(\mu_{i,j}\). Let 
\begin{equation*}
\beta = \begin{bmatrix}
-1/2\\
\vdots\\
-1/2\\
\vline height 1ex\\
\mathbf{0}\\
\vline height 1ex
\end{bmatrix}\in \mathbb{Q}^{r+n}.
\end{equation*}
We remark that by the smoothness of \(X\),
the columns of \(A\) generate \(\mathbb{Z}^{r+n}\)
as an abelian group.
\end{enumerate}

Throughout this note, we will be only interested in the
matrix \(A\) and the parameter \(\beta\)
defined in (3) above (\(d=r+n\) and \(m=r+p\)
according to our notation).
In this case, 
since \(A\) is homogeneous
and contains \((1,\ldots,1)\) in its row span, 
\(\mathcal{M}_{A}^{\beta}\) is 
regular holonomic.
\end{situation}

From now on, let \(A\) and \(\beta\) be 
the matrix and the parameter defined in \S\ref{sit:notation} (3).

\begin{definition}
\label{def:affine-perid}
We define \emph{affine period integrals} to be
\begin{equation}
\label{eq:affine-period}
\Pi_{\gamma}(x):=\int_{\gamma} 
\frac{1}{s_{1,2}^{1/2}\cdots s_{r,2}^{1/2}}\frac{\mathrm{d}t_{1}}{t_{1}}\wedge\cdots
\wedge\frac{\mathrm{d}t_{n}}{t_{n}},
\end{equation}
where \(\gamma\in\mathrm{H}_{n}(U_{x},\mathscr{E}_{x})\)
and \(s_{k,2} = \sum_{j=0}^{m_{k}} x_{k,j}t^{\rho_{k,j}}\in 
W_{i}^{\vee}\).
\end{definition}
It is easy to check that the
affine period integrals \eqref{eq:affine-period}
are solutions to \(\mathcal{M}_{A}^{\beta}\).
We have the chain-integral map
\begin{equation}
\label{eq:chain-integral-map}
\mathrm{H}_{n}(U_{x},\mathscr{E}_{x})\to 
\operatorname{Sol}^{0}(\mathcal{M}_{A}^{\beta})_{x},~
\gamma\mapsto \Pi_{\gamma}(x)
\end{equation}
Here \(\operatorname{Sol}^{0}(\mathcal{M}_{A}^{\beta}):=
R^{0}\mathcal{H}om_{\mathscr{D}_{W^{\vee}}}(\mathcal{M}_{A}^{\beta},\mathscr{O}_{W^{\vee}})\)
is the classical solution functor.

Now we can state our main result in this note.
\begin{theorem}
\label{thm:main-theorem}
The morphism \eqref{eq:chain-integral-map} is an isomorphism.
\end{theorem}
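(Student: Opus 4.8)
\section*{Proof proposal}

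The plan is to realize the chain-integral map \eqref{eq:chain-integral-map} as a comparison isomorphism between a de Rham–type realization of $\mathcal{M}_A^\beta$ and a twisted homology group, and to deduce bijectivity from the non-resonance of $\beta$ together with the classical theorem of Gel'fand–Kapranov–Zelevinsky on Euler integral representations. First I would record the dimension count: by the remark at the end of \S\ref{sit:notation}, $\mathcal{M}_A^\beta$ is regular holonomic, so $\operatorname{Sol}^0(\mathcal{M}_A^\beta)_x$ is finite-dimensional and its dimension is the holonomic rank of $\mathcal{M}_A^\beta$ at a generic $x$. The key input is that $\beta$ is \emph{non-resonant with respect to $A$} in the sense of Definition \ref{def:non-resonant}; granting this, the main theorem of \cite{1990-Gelfand-Kapranov-Zelevinsky-generalized-euler-integrals-and-a-hypergeometric-functions} tells us simultaneously that the holonomic rank equals the normalized volume $\operatorname{vol}(\operatorname{Conv}(A))$ of the polytope spanned by the columns of $A$, that $\mathcal{M}_A^\beta$ is irreducible as a $\mathscr{D}$-module, and that every solution near a generic point is a GKZ-type Euler integral, i.e.\ of the form \eqref{eq:affine-period} for a suitable twisted cycle $\gamma$. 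So the map \eqref{eq:chain-integral-map} is surjective once non-resonance is established.

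The heart of the matter, then, is to prove that $\beta = (-1/2,\dots,-1/2,\mathbf 0)$ is non-resonant for the specific $A$ built in \S\ref{sit:notation}(3), and then to match the two sides dimensionally. For the non-resonance, by the last sentence of Definition \ref{def:non-resonant} it suffices to check that $\beta$ avoids $\mathbb{C}F + \mathbb{Z}^{r+n}$ for every facet $F$ of the cone $\mathbb{R}_+A$. Here I would exploit the block structure $A = [A_1 \mid \cdots \mid A_r]$ with $\mu_{i,j} = (e_i,\rho_{i,j})$: a facet of $\mathbb{R}_+A$ is cut out by a linear functional $(\lambda,u)\in\mathbb{R}^r\times M_{\mathbb{R}}$ that is nonnegative on all columns and vanishes on those spanning the facet. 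Writing out $\langle(\lambda,u),\mu_{i,j}\rangle = \lambda_i + \langle u,\rho_{i,j}\rangle \ge 0$ and using $\rho_{i,0}=\mathbf 0$ gives $\lambda_i\ge 0$ for all $i$, and the condition that the support function associated to the nef divisor $F_i = \sum_{\rho\in S_i} D_\rho$ (equivalently to the lattice polytope $\nabla_i$) is convex forces the relevant inequalities to be strict away from the supporting hyperplane — this is precisely the convexity-of-support-function characterization of nef recalled at the end of \S1.1, applied on the dual side. The upshot is that the $\mathbb{C}$-span of any facet of $\mathbb{R}_+A$, intersected with the affine slice containing $\beta$, can meet $\beta + \mathbb{Z}^{r+n}$ only if some partial sum of the $-1/2$ entries becomes an integer with the torus coordinates landing in the lattice, and the structure of which $e_i$ appear on a given facet rules this out: a facet either contains all columns of some block $A_i$ (so the $i$-th coordinate of anything in $\mathbb{C}F$ is unconstrained but the others are pinned, and the half-integers survive) or omits a vertex, in which case one traces through that the first $r$ coordinates of $\mathbb{C}F + \mathbb{Z}^{r+n}$ cannot simultaneously be congruent to $-1/2 \bmod \mathbb{Z}$.

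To finish, I would assemble the isomorphism as follows. Non-resonance plus the GKZ theorem gives $\dim_{\mathbb{C}}\operatorname{Sol}^0(\mathcal{M}_A^\beta)_x = \operatorname{vol}(\operatorname{Conv}(A))$ and surjectivity of \eqref{eq:chain-integral-map}; on the homology side, $\dim_{\mathbb{C}}\mathrm{H}_n(U_x,\mathscr{E}_x)$ equals the Euler characteristic of the rank-one local system $\mathscr{E}_x$ on the affine variety $U_x = T\setminus\bigcup_k\{s_{k,2}=0\}$, which vanishes in all degrees $\ne n$ for generic $x$ (the complement of a generic union of hypersurfaces in a torus is a Stein manifold of the homotopy type of an $n$-complex, and the twisted Euler characteristic is combinatorially the same normalized volume by the standard Gel'fand–Kapranov–Zelevinsky / Adolphson computation). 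Thus both spaces have the same dimension, and a surjection between finite-dimensional spaces of equal dimension is an isomorphism. I expect the main obstacle to be the non-resonance verification in the previous paragraph: translating ``$E_i$ is nef'' into the strict-inequality statement about facets of $\mathbb{R}_+A$ and checking that no proper face can absorb the half-integral part of $\beta$ requires a careful bookkeeping of which $1$-cones $\rho_{i,j}$ lie on a given facet, and it is here that the hypothesis that the partition is a genuine nef-partition (rather than an arbitrary cyclic cover) is essential.
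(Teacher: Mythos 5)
Your overall strategy coincides with the paper's: establish that \(\beta\) is non-resonant with respect to \(A\) and then invoke the theorem of Gel'fand--Kapranov--Zelevinsky (Theorem \ref{thm:gkz-non-resonant}) on Euler integrals. (Your extra dimension count via the twisted Euler characteristic is not needed, since the cited GKZ result already gives an isomorphism at every \(x\), whereas your count only works at generic \(x\); this is a minor point.) The genuine gap is that the non-resonance of \(\beta\) --- which is the entire mathematical content of the paper's proof, occupying all of \S\ref{sec:non-resonant} --- is only gestured at and, as you yourself concede, the ``careful bookkeeping'' is left undone. Moreover, the sketch you give is not on track to close it. Your dichotomy (``a facet either contains all columns of some block \(A_i\)\,\dots\ or omits a vertex'') and the criterion ``the first \(r\) coordinates of \(\mathbb{C}F+\mathbb{Z}^{r+n}\) cannot simultaneously be congruent to \(-1/2\)'' are not the right conditions: non-resonance is checked facet by facet, and for a fixed facet \(F\) the issue is whether \emph{some integral} linear functional vanishing on \(F\) takes a non-integral value on \(\beta\). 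You also invoke convexity of the support functions of the divisors \(F_i\) on the dual side \(X^{\vee}\); the relevant convexity is that of the support functions \(\varphi_i\) of \(E_i=\sum_{\rho\in J_i}D_\rho\) on \(X\), since the columns of \(A\) are built from the rays of \(\Sigma\).

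Concretely, what is missing is the following chain, which is how the paper closes the argument: (i) using convexity of the \(\varphi_i\) (nef-ness of the \(E_i\) on \(X\)), show \(\mathbb{R}_{+}A=\bigcup_{\sigma\in\Sigma(n)}\hat{\sigma}\), where \(\hat{\sigma}\) is the simplicial cone generated by the lifted rays \(\hat{\rho}=(e_i,\rho)\), \(\rho\in\sigma(1)\), together with \(e_1\times\mathbf{0},\dots,e_r\times\mathbf{0}\); (ii) conclude that any facet \(F\) of \(\mathbb{R}_{+}A\) is a facet of some simplicial \(\hat{\sigma}\), hence contains all but one of its generators, and use properness of \(\Sigma\) to rule out the case that \(F\) contains all the \(e_i\times\mathbf{0}\); (iii) deduce that \(F\) is cut out by a functional of the form \((e_j,m)\) where \(m=m_{\sigma}^{j}\in M\) is the Cartier data of \(E_j\) on \(\sigma\) --- integrality of \(m\) is exactly where the Cartier property of the nef-partition and the smoothness of \(X\) enter, and you never invoke it. Once this is in place, writing \(\beta=f+z\) with \(f\in\mathbb{C}F\), \(z\in\mathbb{Z}^{r+n}\) and pairing with \((e_j,m)\) yields \(\pm\tfrac12=\langle(e_j,m),z\rangle\in\mathbb{Z}\), a contradiction. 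Without steps (i)--(iii) your proposal reduces the theorem to the statement it was supposed to prove.
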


Recall the following result 
in \cite{1990-Gelfand-Kapranov-Zelevinsky-generalized-euler-integrals-
and-a-hypergeometric-functions}.
\begin{theorem}[\cite{1990-Gelfand-Kapranov-Zelevinsky-generalized-euler-integrals-
and-a-hypergeometric-functions}*{\S2.10}]
\label{thm:gkz-non-resonant}
Let \(A\) and \(\beta\) be as in 
\S\ref{sit:notation} (3).
If \(\beta\) is non-resonant, i.e.,
\begin{equation*}
\beta\notin \bigcup_{F\subset \mathbb{R}_{+}A} 
\left(\mathbb{C}F+\mathbb{Z}^{r+n}\right)
\end{equation*}
where the union runs through all the proper faces of \(\mathbb{R}_{+}A\),
then the map \eqref{eq:chain-integral-map}
is an isomorphism for every \(x\).
\end{theorem}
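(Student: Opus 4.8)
The plan is to reconstruct the Gel'fand--Kapranov--Zelevinsky argument: compute separately the holonomic rank of \(\mathcal{M}_{A}^{\beta}\) and the dimension of \(\mathrm{H}_{n}(U_{x},\mathscr{E}_{x})\), show both equal the normalized volume \(\mathrm{vol}(A)\) of the lattice polytope \(Q:=\operatorname{Conv}(\{\mu_{i,j}\})\) (which lies in the hyperplane \(\sum_{k=1}^{r}y_{k}=1\) because \(A\) is homogeneous), and then prove that \eqref{eq:chain-integral-map} is surjective, whence an isomorphism by the dimension count. I would first carry this out for \(x\) in the Zariski-open set \(U^{\circ}\) where the \(s_{k,2}\) are non-degenerate with respect to their Newton polytopes \(\nabla_{k}\); there \(x\mapsto U_{x}\) is a topological fibration and, since the integrands vary holomorphically, \eqref{eq:chain-integral-map} is a morphism of local systems on \(U^{\circ}\), so it suffices to treat one convenient \(x\). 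The remaining \(x\) would then be reached by a further degeneration argument.

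On the \(\mathscr{D}\)-module side: as already noted, \(\mathcal{M}_{A}^{\beta}\) is regular holonomic, so \(\operatorname{Sol}^{0}(\mathcal{M}_{A}^{\beta})|_{U^{\circ}}\) is a local system of rank \(\operatorname{rank}\mathcal{M}_{A}^{\beta}\). I would invoke the GKZ \(\Gamma\)-series: fix a regular unimodular triangulation \(\mathcal{T}\) of \(Q\) (available because \(X\) is smooth, so \(\Sigma\) already refines into unimodular cones); each maximal simplex \(\sigma\in\mathcal{T}\) yields a \(\Gamma\)-series solution \(\Phi_{\sigma}\) convergent in an associated chart near the large-complex-structure limit, and for \(\beta\) non-resonant the \(\mathrm{vol}(A)\) series \(\Phi_{\sigma}\) (one per maximal simplex) are \(\mathbb{C}\)-linearly independent --- non-resonance is exactly what keeps each \(\Phi_{\sigma}\) nonzero and keeps the leading exponents of distinct \(\Phi_{\sigma}\) from colliding. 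Hence \(\operatorname{rank}\mathcal{M}_{A}^{\beta}=\mathrm{vol}(A)\).

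On the topological side: I would compute \(\dim\mathrm{H}_{n}(U_{x},\mathscr{E}_{x})\) from the twisted Euler characteristic of the torus-hypersurface complement \(U_{x}=T\setminus\bigcup_{k}\{s_{k,2}=0\}\) with coefficients in the rank-one system \(\mathscr{E}_{x}\) (monodromy \(-1\) around each branch component). The input is that for non-resonant monodromy the twisted complex is concentrated in degree \(n\), so \(\dim\mathrm{H}_{n}(U_{x},\mathscr{E}_{x})=(-1)^{n}\chi(U_{x},\mathscr{E}_{x})\), and the latter, by the Newton-polytope formula for Euler characteristics of very affine varieties (Adolphson--Sperber, Khovanskii), equals \(\mathrm{vol}(A)\). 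Non-resonance enters precisely here: the proper faces \(F\subset\mathbb{R}_{+}A\) index the toric strata obtained by restricting the \(s_{k,2}\) to faces of the \(\nabla_{k}\), and the condition \(\beta\notin\bigcup_{F}(\mathbb{C}F+\mathbb{Z}^{r+n})\) is exactly what forbids resonant contributions from those strata, so no cohomology survives below degree \(n\) and the period pairing with twisted de Rham cohomology --- in which the integrand \(\big(\prod_{k}s_{k,2}^{-1/2}\big)\frac{\mathrm{d}t_{1}}{t_{1}}\wedge\cdots\wedge\frac{\mathrm{d}t_{n}}{t_{n}}\) defines a class --- remains perfect.

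Finally, matching the two sides and the main obstacle. The affine period integrals \eqref{eq:affine-period} are solutions of \(\mathcal{M}_{A}^{\beta}\) (checked above), so \eqref{eq:chain-integral-map} is a morphism between local systems of equal rank \(\mathrm{vol}(A)\), and it is enough to prove surjectivity at an \(x\) near the large-complex-structure limit attached to \(\mathcal{T}\). There I would pick, for each maximal simplex \(\sigma\), a compact twisted cycle \(\gamma_{\sigma}\) --- a Lefschetz-thimble/Pochhammer-type contour through the critical point of \(\prod_{k}s_{k,2}^{-1/2}\) localized near the torus-fixed stratum of \(X\) indexed by \(\sigma\), there being \(\mathrm{vol}(A)\) such critical points for generic \(x\) --- and show by steepest descent that \(\Pi_{\gamma_{\sigma}}(x)=c_{\sigma}\Phi_{\sigma}+(\text{terms with strictly larger exponents})\) with \(c_{\sigma}\neq 0\); since the \(\Phi_{\sigma}\) have distinct leading exponents, the change-of-basis matrix is triangular and invertible, so the \(\Pi_{\gamma_{\sigma}}\) span \(\operatorname{Sol}^{0}(\mathcal{M}_{A}^{\beta})_{x}\), giving surjectivity and hence the isomorphism. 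The main obstacle is the topological dimension count of the third paragraph, where non-resonance must be used quantitatively through a face-by-face analysis of \(\mathbb{R}_{+}A\) (equivalently, of degenerations of the \(s_{k,2}\) to faces of the \(\nabla_{k}\)) to exclude resonant boundary cohomology; the cycle-matching, by comparison, only needs to be pushed far enough to read off distinct leading exponents rather than to identify \(\Pi_{\gamma_{\sigma}}\) with \(\Phi_{\sigma}\) exactly.
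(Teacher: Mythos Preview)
The paper does not prove this statement: Theorem~\ref{thm:gkz-non-resonant} is quoted from \cite{1990-Gelfand-Kapranov-Zelevinsky-generalized-euler-integrals-and-a-hypergeometric-functions}*{\S2.10} and used as a black box (combined with the non-resonance established in Theorem~\ref{thm:non-resonant}, it yields Theorem~\ref{thm:main-theorem}). There is therefore no argument in the paper to compare yours against.

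Your reconstruction is broadly faithful to the GKZ strategy: equate both sides with \(\mathrm{vol}(A)\), the \(\mathscr{D}\)-module side via \(\Gamma\)-series and the cycle side via vanishing of twisted cohomology outside middle degree, then match bases near a large-complex-structure point. Two caveats. First, the unimodular triangulation you invoke is a feature of the particular \(A\) in \S\ref{sit:notation} (smoothness of \(X\)), not of the general GKZ setting; since the theorem as stated here is already specialized to that \(A\), this is harmless, but you are not reproving the cited result in its full generality. Second, the passage from generic \(x\in U^{\circ}\) to ``every \(x\)'' by an unnamed ``degeneration argument'' is underspecified: at degenerate \(x\) both \(\dim\operatorname{Sol}^{0}(\mathcal{M}_{A}^{\beta})_{x}\) and the topology of \((U_{x},\mathscr{E}_{x})\) can jump, and showing that \eqref{eq:chain-integral-map} remains an isomorphism there is not a formal consequence of the generic case --- one needs either a direct argument at each \(x\) or an upgrade of \eqref{eq:chain-integral-map} to a morphism of constructible (not merely locally constant) complexes on all of \(W^{\vee}\).
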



In the rest of this section, we explain how 
\(s_{k,2}\) defined in \S\ref{sit:notation} (2)
is related to an element in \(\mathrm{H}^{0}(X^{\vee},\mathscr{O}(F_{k}))\)
and therefore justify the notation \(s_{k,2}\).

Let \(\nu\in N\cap \Delta^{\vee}\) be an integral point such 
that \(\mathbb{R}_{+}\nu\) does not belong to the face fan of \(\Delta^{\vee}\). 
Denote by \(F_{\nu}\) the minimal face of \(\Delta^{\vee}\) containing
\(\nu\) and by \(V(F_{\nu})\) the set of all vertices of \(F_{\nu}\).
\begin{lemma}
There exists an \(1\le i\le r\) such that \(V(F_{\nu})\subseteq I_{i}\).
\end{lemma}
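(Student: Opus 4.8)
The plan is to work entirely on the combinatorial side and exploit the nef-partition structure. Recall that $\Delta^{\vee} = \operatorname{Conv}(\nabla_1,\ldots,\nabla_r)$ with $\nabla_i = \operatorname{Conv}(I_i \cup \{\mathbf 0\})$, so every vertex of $\Delta^{\vee}$ lies in some $I_i$, and more generally every point of $\Delta^{\vee}$ is a convex combination of points drawn from the various $\nabla_i$. First I would translate the hypothesis: the condition that $\mathbb R_{+}\nu$ does \emph{not} belong to the face fan $\mathcal F(\Delta^{\vee})$ means precisely that $\nu$ is not contained in any cone over a proper face of $\Delta^{\vee}$, equivalently that the minimal face $F_\nu$ of $\Delta^{\vee}$ containing $\nu$ on its \emph{boundary}... — more carefully, $F_\nu$ is the unique face with $\nu$ in its relative interior, and the hypothesis says $\nu \notin \mathbb R_{+}(\text{any proper face})$. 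The key point I want to extract is that $\nu$ "sees" all the summands $\nabla_i$ in a balanced way.

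The main step is to use the support-function characterization recalled in the paper: since $E_i = \sum_{\rho \in I_i} D_\rho$ is numerically effective on $\mathbf P_\Delta$, its support function $\varphi_{E_i}$ is convex on $N_{\mathbb R}$, and $\nabla_i$ is the divisor polytope of the dual side, so $\varphi_{E_i}(u) = \min_{m \in \Delta_i}\langle m,u\rangle$ while dually $\nabla_i = \{u : \langle u, -\rangle \ge \text{something}\}$. Concretely, for $u \in N_{\mathbb R}$ define $\psi_i(u) := \max\{ \lambda \ge 0 : \lambda^{-1}u \in \nabla_i\}$ (a concave, positively homogeneous "gauge"), with the convention $\psi_i(u)=\infty$ if no such $\lambda$ exists in the relevant sense; the reflexivity of $\Delta$ and the nef-partition give $\sum_i \psi_i = \psi_{\Delta^{\vee}}$, the gauge of $\Delta^{\vee}$ itself. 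The hypothesis "$\nu$ not in the face fan" says $\psi_{\Delta^{\vee}}(\nu) > 1$ in a suitable normalization, i.e. $\nu$ is strictly inside $\Delta^{\vee}$ relative to the facet structure. Then I would argue: the minimal face $F_\nu$ equals $\operatorname{Conv}\big(\bigcup_i (F_\nu \cap \nabla_i)\big)$, and a vertex $v$ of $F_\nu$ must be a vertex of one of the $\nabla_i$, hence $v \in I_i \cup \{\mathbf 0\}$; since $\mathbf 0$ is interior to $\Delta^{\vee}$ it is not a vertex of the proper face $F_\nu$, so $v \in I_i$. The real content is that \emph{one and the same} $i$ works for \emph{all} vertices of $F_\nu$: this is where I would use that $F_\nu$ lies in a single "wall" determined by the additivity $\sum_i \psi_i$ — if two vertices of $F_\nu$ came from $I_i$ and $I_j$ with $i \ne j$, then a generic point of $F_\nu$ would be expressible using both $\nabla_i$ and $\nabla_j$ nontrivially, forcing $\psi_{\Delta^{\vee}}$ to be strictly larger than each $\psi_i$ there, which would place $F_\nu$ inside the cone over the face $\operatorname{Conv}(\nabla_i)$ or $\operatorname{Conv}(\nabla_j)$, i.e. in the face fan — contradicting the hypothesis. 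Equivalently: the faces of $\Delta^{\vee}$ that meet more than one $\nabla_i$ nontrivially are exactly the ones visible in the face fan, so a face avoiding the face fan is "supported on a single $\nabla_i$".

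I expect the main obstacle to be making the last dichotomy rigorous, namely showing that a face $F_\nu$ whose vertex set meets two different $I_i$'s necessarily lies in the cone over a proper face of $\Delta^{\vee}$. The clean way is probably to pick the facet(s) of $\Delta^{\vee}$ containing $F_\nu$, pull back their defining linear functionals to $\Delta = \Delta_1 + \cdots + \Delta_r$ via Batyrev--Borisov duality, and observe that such a functional attains its minimum over $\Delta$ at a Minkowski sum of vertices $v_1 + \cdots + v_r$ with $v_i \in \Delta_i$; the face $F_\nu$ is then dual to the set of functionals whose minimizing Minkowski decomposition is \emph{constant}, which pins down a single index. A secondary technical point is the degenerate case where some $F_\nu \cap \nabla_i$ is just $\{\mathbf 0\}$ or lower-dimensional; one handles it by noting $\mathbf 0 \in \nabla_i$ for every $i$, so $\mathbf 0$ never obstructs membership and can be discarded from vertex considerations. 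Once the single-index claim is in place, $V(F_\nu) \subseteq I_i$ is immediate, completing the proof.
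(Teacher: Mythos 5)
Your proposal does not close the key step, and the route you sketch for it cannot work because it never uses the integrality of \(\nu\). The statement is genuinely arithmetic: for non-integral points it is false. Take \(\mathbf{P}^{1}\times\mathbf{P}^{1}\) with the nef-partition \(I_{1}=\{(\pm 1,0)\}\), \(I_{2}=\{(0,\pm 1)\}\), so that \(\nabla_{1}=[-1,1]\times\{0\}\), \(\nabla_{2}=\{0\}\times[-1,1]\) and \(\Delta^{\vee}=\operatorname{Conv}(\nabla_{1},\nabla_{2})\) is the standard diamond. The point \(\nu=(1/2,1/2)\) satisfies the hypothesis as you read it (\(\mathbb{R}_{+}\nu\) is not a cone of the face fan), its minimal face is the edge joining \((1,0)\in I_{1}\) and \((0,1)\in I_{2}\), and the conclusion fails. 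In particular the ``dichotomy'' you hope to prove --- that a face whose vertex set meets two different \(I_{i}\)'s must lie in the cone over a proper face, ``i.e.\ in the face fan, contradicting the hypothesis'' --- is not a dichotomy at all: the face fan consists of the cones over \emph{all} faces of \(\Delta^{\vee}\), so every \(F_{\nu}\) lies in its support; the hypothesis only says that the ray \(\mathbb{R}_{+}\nu\) is not itself a cone of the fan (i.e.\ \(\nu\) is not a multiple of a vertex), and faces of \(\Delta^{\vee}\) meeting several \(\nabla_{i}\) nontrivially do occur, as the edge above shows. The auxiliary identity \(\sum_{i}\psi_{i}=\psi_{\Delta^{\vee}}\) for gauges of the \(\nabla_{i}\) is also false (already for \(\mathbf{P}^{1}\), where the gauges of \([0,1]\) and \([-1,0]\) are infinite off a half-line); the additivity attached to a nef-partition concerns the support functions \(\varphi_{E_{i}}\), not gauges of the \(\nabla_{i}\).

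The integrality is exactly where the paper's proof lives, and it is short: let \(\tau\) be the cone over \(F_{\nu}\); since each \(E_{i}\) is Cartier there is \(m^{i}_{\tau}\in M\) with \(\langle m^{i}_{\tau},v\rangle=-1\) for \(v\in\tau(1)\cap I_{i}\) and \(=0\) for the remaining generators of \(\tau\). Writing \(\nu=\sum_{v\in V(F_{\nu})}c_{v}v\) with \(c_{v}>0\), reflexivity (the facet equation \(\langle m,\cdot\rangle=-1\)) forces \(\sum_{v}c_{v}=1\); if \(V(F_{\nu})\) met both \(I_{i}\) and \(I_{j}\) with \(i\ne j\), then \(\langle m^{i}_{\tau},\nu\rangle\) would lie strictly between \(-1\) and \(0\), contradicting \(\nu\in N\) and \(m^{i}_{\tau}\in M\). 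Your preliminary observation that each individual vertex of \(F_{\nu}\) lies in some \(I_{i}\) is fine, and your closing suggestion about facet functionals points vaguely in the right direction, but to repair the proposal you must pair integral Cartier data against the integral point \(\nu\) in this way; no purely convex-geometric argument about faces versus the face fan can succeed, since the statement it would prove is false without integrality.
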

\begin{proof}
Since \(I_{1}\sqcup\cdots\sqcup I_{r}=\mathcal{N}(\Delta)(1)\)
is a nef-partition representing the 
Minkowski sum decomposition \(\Delta=\Delta_{1}+\cdots+\Delta_{r}\), the Weil divisor
\begin{equation*}
\sum_{\rho \in I_{i}} D_{\rho}
\end{equation*}
is Cartier on \(X_{\mathcal{N}(\Delta)}\) and
it follows that for any \(\tau\in\mathcal{N}(\Delta)\) 
there exists an \(m_{\tau}\in M\) such that
\begin{equation*}
\begin{cases}
\langle m_{\tau},\rho\rangle = -1, & \mbox{if \(\rho\in\tau(1)\cap I_{i}\)}\\
\langle m_{\tau},\rho\rangle = 0, & \mbox{if \(\rho\in\tau(1)\) but \(\rho\notin I_{i}\)}.
\end{cases}
\end{equation*}
Note that \(F_{\nu}\) is contained in a facet of \(\Delta^{\vee}\).
We may assume that there exists an \(m\in M\) such that
\begin{equation*}
F_{\nu}\subseteq \{n\in N_{\mathbb{R}}~|~\langle m,n\rangle=-1\}.
\end{equation*}
On one hand,
if \(\nu = \sum_{v\in V(F_{\nu})} c_{v} v\) with \(c_{v}>0\), we have
\begin{equation*}
-1 = \langle m,\nu \rangle = 
\sum_{v\in V(F_{\nu})} c_{v}\langle m,v\rangle = - \sum_{v\in V(F_{\nu})} c_{v}.
\end{equation*}
On the other hand, suppose \(V(F_{\nu})\nsubseteq I_{i}\) for all \(i\).
Let \(\tau\) be the cone over \(F_{\nu}\). By the discussion above, for each \(i\),
there exists an \(m^{i}_{\tau}\in M\) such that
\begin{equation*}
\begin{cases}
\langle m^{i}_{\tau},v\rangle = -1, & \mbox{if \(v\in\tau(1)\cap I_{i}\)}\\
\langle m^{i}_{\tau},v\rangle = 0, & \mbox{if \(v\in\tau(1)\) but \(v\notin I_{i}\)}.
\end{cases}
\end{equation*}
By hypothesis, we can always find distinct \(i,j\in\{1,\ldots,r\}\)
so that \(V(F_{\nu})\cap I_{i}\ne\emptyset\)
and \(V(F_{\nu})\cap I_{j}\ne\emptyset\). 
Then we have
\begin{equation*}
0>\langle m_{\tau}^{i},\nu\rangle = \sum_{v\in V(F_{\nu})} c_{v} \langle m^{i}_{\tau},v\rangle
>\sum_{v\in V(F_{\nu})} c_{v} = -1.
\end{equation*}
Here the first inequality holds since \(V(F_{\nu})\cap I_{i}\ne\emptyset\),
and the second one holds since \(V(F_{\nu})\cap I_{j}\ne\emptyset\). 
This contradicts to the fact that \(\langle m_{\tau},\nu\rangle\in\mathbb{Z}\).
\end{proof}

Let \(J_{1}\sqcup\cdots\sqcup J_{r}=\Sigma(1)\) be the nef-partition on \(X\)
induced by the pullback of \(I_{1}\sqcup\cdots\sqcup I_{r}=\mathcal{N}(\Delta)(1)\).
We immediately have the following corollary.
\begin{corollary}
\label{cor:sec-notation}
We have \(J_{i} = \nabla_{i}\cap N\setminus \{\mathbf{0}\}\).
Therefore,
\begin{equation*}
\mathrm{H}^{0}(X^{\vee},\mathscr{O}(F_{k}))\cong \bigoplus_{0\le j\le m_{k}}
\mathbb{C} t^{\rho_{k,j}}
\end{equation*}
and \(W_{i}^{\vee}\) in \S\ref{sit:notation} (2)
is identified with \(\mathrm{H}^{0}(X^{\vee},\mathscr{O}(F_{k}))\). 
In particular,
\begin{equation*}
N\cap \Delta^{\vee}= \bigcup_{i=1}^{r} (\nabla_{i}\cap N).
\end{equation*}
In other words, any integral point in \(\Delta^{\vee}\)
belongs to \(\nabla_{i}\) for some \(1\le i\le r\). 
\end{corollary}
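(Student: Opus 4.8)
The plan is to establish the first displayed identity $J_{i}=\nabla_{i}\cap N\setminus\{\mathbf{0}\}$ from the preceding Lemma together with a short support-function computation, and then to read the remaining assertions off from it. I would start by noting that both families $\{J_{i}\}_{i=1}^{r}$ and $\{\nabla_{i}\cap N\setminus\{\mathbf{0}\}\}_{i=1}^{r}$ cover $\Sigma(1)=\Delta^{\vee}\cap N\setminus\{\mathbf{0}\}$. For the $J_{i}$ this is the defining property of the pullback nef-partition. For the $\nabla_{i}$ it follows from the Lemma: each $\nabla_{i}=\operatorname{Conv}(I_{i}\cup\{\mathbf{0}\})$ is contained in $\Delta^{\vee}$, and conversely, given $\nu\in\Sigma(1)$, either $\mathbb{R}_{+}\nu\notin\mathcal{F}(\Delta^{\vee})$, in which case the Lemma gives $V(F_{\nu})\subseteq I_{i}$ for some $i$ and hence $\nu\in F_{\nu}=\operatorname{Conv}(V(F_{\nu}))\subseteq\nabla_{i}$, or $\mathbb{R}_{+}\nu\in\mathcal{F}(\Delta^{\vee})$, in which case $\nu$ must be a vertex of $\Delta^{\vee}$ (vertices of the reflexive polytope $\Delta^{\vee}$ are primitive, so $\mathbf{0}$ and the vertex are the only integral points of the segment joining them) and hence lies in one of the $I_{i}$, and thus in the corresponding $\nabla_{i}$. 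Thus $\Sigma(1)=\bigcup_{i}(\nabla_{i}\cap N\setminus\{\mathbf{0}\})$.

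To match the two covers index by index, recall that the pullback $\pi^{*}E_{i}$ of the Cartier divisor $E_{i}$ along $\pi\colon X\to\mathbf{P}_{\Delta}$ is the integral divisor $\sum_{\rho\in\Sigma(1)}(-\varphi_{E_{i}}(\rho))D_{\rho}$, so $J_{i}=\{\rho\in\Sigma(1):\varphi_{E_{i}}(\rho)=-1\}$. The support function $\varphi_{E_{i}}=\min_{m\in\Delta_{i}}\langle m,-\rangle$ is integer-valued on $N$, vanishes at $\mathbf{0}$, is non-positive (because $\mathbf{0}\in\Delta_{i}$), takes the value $-1$ on $I_{i}$ and $0$ on $\mathcal{N}(\Delta)(1)\setminus I_{i}$, satisfies $\varphi_{E_{i}}(\sum_{v}\lambda_{v}v)\ge\sum_{v}\lambda_{v}\varphi_{E_{i}}(v)$ on convex combinations, and $\sum_{i}\varphi_{E_{i}}=\varphi_{-K_{X}}$, which equals $-1$ on every $\rho\in\Sigma(1)$ (by crepancy, $-K_{X}=\sum_{\rho\in\Sigma(1)}D_{\rho}$). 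Consequently, for each $\rho\in\Sigma(1)$ the integers $\varphi_{E_{1}}(\rho),\dots,\varphi_{E_{r}}(\rho)$ are non-positive and sum to $-1$, so exactly one of them equals $-1$. Now if $\rho\in\nabla_{i}\cap N\setminus\{\mathbf{0}\}$, writing $\rho$ as a convex combination of $\mathbf{0}$ and points of $I_{i}$ and using the above inequality yields $\varphi_{E_{j}}(\rho)\ge 0$ for all $j\neq i$ (the relevant values of $\varphi_{E_{j}}$ all vanish), so the unique index realizing $-1$ at $\rho$ is $i$, i.e.\ $\rho\in J_{i}$. Thus $\nabla_{i}\cap N\setminus\{\mathbf{0}\}\subseteq J_{i}$ for every $i$; since both families cover $\Sigma(1)$ and the $J_{i}$ are pairwise disjoint, these inclusions are equalities, which proves $J_{i}=\nabla_{i}\cap N\setminus\{\mathbf{0}\}$ (and, incidentally, $\nabla_{i}\cap\nabla_{j}\cap N=\{\mathbf{0}\}$ for $i\neq j$).

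The remaining assertions then follow routinely. The space $\mathrm{H}^{0}(X^{\vee},\mathscr{O}(F_{k}))$ coincides with $\mathrm{H}^{0}(\mathbf{P}_{\nabla},\mathscr{O}(F_{k}))$ (pullback along the MPCP map preserves global sections), whose canonical basis is indexed by the integral points of the divisor polytope of $F_{k}$, namely $\nabla_{k}\cap N$; and $\nabla_{k}\cap N=(\nabla_{k}\cap N\setminus\{\mathbf{0}\})\cup\{\mathbf{0}\}=J_{k}\cup\{\rho_{k,0}\}=\{\rho_{k,j}:0\le j\le m_{k}\}$. Hence $\mathrm{H}^{0}(X^{\vee},\mathscr{O}(F_{k}))=\bigoplus_{0\le j\le m_{k}}\mathbb{C}\,t^{\rho_{k,j}}$, and the linear map $W_{k}^{\vee}\to\mathrm{H}^{0}(X^{\vee},\mathscr{O}(F_{k}))$ sending the coordinate vector $(x_{k,j})_{j}$ to $s_{k,2}=\sum_{j}x_{k,j}t^{\rho_{k,j}}$ is an isomorphism, justifying the notation $s_{k,2}$. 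Finally $N\cap\Delta^{\vee}=\Sigma(1)\cup\{\mathbf{0}\}=\big(\bigcup_{i}(\nabla_{i}\cap N\setminus\{\mathbf{0}\})\big)\cup\{\mathbf{0}\}=\bigcup_{i}(\nabla_{i}\cap N)$, since $\mathbf{0}\in\nabla_{i}$ for all $i$.

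The step I expect to be the main obstacle is the index-by-index matching $J_{i}=\nabla_{i}\cap N\setminus\{\mathbf{0}\}$: the set-theoretic covering $\Sigma(1)\subseteq\bigcup_{i}\nabla_{i}$ is precisely the content of the Lemma, but tying the combinatorially defined pullback nef-partition to the \emph{individual} polytopes $\nabla_{i}$ genuinely requires the sign and integrality bookkeeping for the support functions $\varphi_{E_{i}}$ — their non-positivity (since $\mathbf{0}\in\Delta_{i}$), their integrality on $N$, and the identity $\sum_{i}\varphi_{E_{i}}=\varphi_{-K_{X}}\equiv-1$ on $\Sigma(1)$.
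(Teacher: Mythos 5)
Your argument is correct and follows the paper's intended route: the paper states this corollary as an immediate consequence of the preceding Lemma, and your proof uses exactly that Lemma for the covering $\Sigma(1)\subseteq\bigcup_i\nabla_i$ (handling the vertex case separately), supplemented by the standard support-function bookkeeping ($\varphi_{E_i}$ integral, non-positive, summing to $-1$ on $\Sigma(1)$) that the paper leaves implicit in its definition of the pullback nef-partition $J_1\sqcup\cdots\sqcup J_r$. In short, you have simply made explicit the index-matching step the paper treats as immediate; there is no substantive divergence.
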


Now we explain how the cycle we integrate over in the period integrals
is related to a cycle in \(\mathrm{H}_{n}(U_{x},\mathscr{E}_{x})\).
Recall that \(X^{\vee}\to\mathbf{P}_{\nabla}\) is
a smooth MPCP desingularization. 
Pick \(x=(s_{1,2},\ldots,s_{r,2})\in W_{1}^{\vee}\times\cdots\times W_{r}^{\vee}\) and 
let \(\pi\colon Y^{\vee}\to X^{\vee}\) be the double cover representing \(x\). 
Denote
by \(R_{x}\) the branch divisor of \(\pi\). Then \(U_{x}=X^{\vee}\setminus R_{x}\). Put
\(V_{x}=Y^{\vee}\setminus R_{x}\) and
let \(i\colon R_{x}\to Y^{\vee}\) and \(j\colon V_{x}\to Y^{\vee}\).
We have a distinguished triangle (indeed a short exact sequence)
\begin{equation}
\label{eq:distinguished-tri}
j_{!} j^{-1} \mathbb{C}_{Y^{\vee}} \to 
\mathbb{C}_{Y^{\vee}} \to
i_{\ast} i^{-1} \mathbb{C}_{Y^{\vee}}\to.
\end{equation}
Applying the functor \(R\pi_{\ast}=R\pi_{!}\) to \eqref{eq:distinguished-tri}
and the base change theorem from the 
commutative diagram
\begin{equation*}
\begin{tikzcd}
& V_{x}\ar[r,"j"]\ar[d,"\pi"] & Y^{\vee}\ar[d,"\pi"] & R_{x}\ar[l,"i"]\ar[d,equal]\\
& U_{x}\ar[r,"j"] & X^{\vee} & R_{x}\ar[l,"i"]
\end{tikzcd},
\end{equation*}
we arrive at a triangle
\begin{equation*}
j_{!}R\pi_{\ast}\mathbb{C}_{V_{x}}\to R\pi_{\ast}\mathbb{C}_{Y^{\vee}}
\to i_{\ast}\mathbb{C}_{R_{x}}\to.
\end{equation*}
\(R\pi_{\ast}\mathbb{C}_{V_{x}}\) (resp.~\(R\pi_{\ast}\mathbb{C}_{Y^{\vee}}\))
is decomposed into eigensubsheaves \(\mathbb{C}_{U_{x}}\oplus\mathscr{E}_{x}\)
(resp.~\(\mathbb{C}_{X^{\vee}}\oplus\mathscr{G}\))
according to the Galois group action of the cover \(Y^{\vee}\to X^{\vee}\). 
Moreover, \(j_{!}\mathscr{E}_{x}=\mathscr{G}\).
Taking \(R\Gamma_{\mathrm{c}}(X,-)\), we see that
\begin{equation*}
\mathrm{H}_{\mathrm{c}}^{n}(Y^{\vee},\mathbb{C})\cong 
\mathrm{H}_{\mathrm{c}}^{n}(X^{\vee},\mathbb{C})\oplus
\mathrm{H}_{\mathrm{c}}^{n}(U_{x},\mathscr{E}_{x}).
\end{equation*}
Combined with Poincar\'{e} duality, 
since the integration over cycles in 
\(\mathrm{H}_{\mathrm{c}}^{n}(X^{\vee},\mathbb{C})\cong
\mathrm{H}_{n}(X^{\vee},\mathbb{C})\),
we conclude that
the affine period integrals defined in Definition 
\ref{def:affine-perid} are exactly the 
restriction of all possibly non-trivial
period integrals of \(\mathcal{Y}^{\vee}\to V\) to \(T\).
Also note that when \(n\) is odd, 
we have \(\mathrm{H}_{\mathrm{c}}^{n}(Y^{\vee},\mathbb{C})\cong 
\mathrm{H}_{\mathrm{c}}^{n}(U_{x},\mathscr{E}_{x})\cong\mathrm{H}_{n}(U_{x},\mathscr{E}_{x})\).

\section{\texorpdfstring{\(\beta\)}{beta} is non-resonant}
\label{sec:non-resonant}
We continue to assume that \(A\) and \(\beta\)
are the same as in \S\ref{sit:notation} (3) in this section.
In this section, we show that in the present case
the exponent \(\beta\) is non-resonant with respect to \(A\). 
Combined with Theorem 
\ref{thm:gkz-non-resonant}, we obtain Theorem \ref{thm:main-theorem}.

The defining fan \(\Sigma\) of \(X\) is a MPCP desingularization of \(\mathbf{P}_{\Delta}\),
the toric variety defined by 
\(\mathcal{F}(\Delta^{\vee})\), the 
\emph{face fan} of the Cayley polytope
\(\operatorname{Conv}(\nabla_{1},\ldots,\nabla_{r})=\Delta^{\vee}\)
(or equivalently a MPCP desingularization of 
\(\mathcal{N}(\Delta)\), the \emph{normal fan} of \(\Delta\)).
Recall that by Batyrev--Borisov's duality construction, if 
\(I_{1}\sqcup\cdots\sqcup I_{r}=\mathcal{N}(\Delta)(1)\)
is the nef-partition on \(\mathbf{P}_{\Delta}\)
representing the Minkowski sum decomposition 
\(\Delta=\Delta_{1}+\cdots+\Delta_{r}\), then
\(\nabla_{i}=\operatorname{Conv}(I_{i}\cup\{\mathbf{0}\})\).

For each \(\sigma\in \Sigma(n)\), let
\begin{equation*}
\hat{\sigma}:=\operatorname{Cone}(\{\hat{\rho}~|~
\rho\in\sigma(1)\}\cup\{e_{1}\times\mathbf{0},
\ldots,e_{r}\times\mathbf{0}\})
\end{equation*}
and
\begin{equation*}
\operatorname{Poly}(\hat{\sigma}):=\operatorname{Conv}(\{\hat{\rho}~|~
\rho\in\sigma(1)\}\cup\{e_{1}\times\mathbf{0},
\ldots,e_{r}\times\mathbf{0}\})
\end{equation*}
where \(\hat{\rho}=(e_{i},\rho)\) if \(\rho\in J_{i}\).
Under our notation in \S\ref{sit:notation}, we have
\(\hat{\rho}_{i,j} = \mu_{i,j}\).
Then \(\operatorname{Poly}(\hat{\sigma})\) is a convex polytope in 
\(\mathbb{R}^{r}\times N_{\mathbb{R}}\). 
Moreover, owing to our hypothesis on \(X\), 
\(\operatorname{Poly}(\hat{\sigma})\) is indeed a simplex 
(its normalized volume is \(1\)). Recall that 
\begin{equation*}
E_{i}:=\sum_{\rho\in J_{i}} D_{\rho}.
\end{equation*}
is a numerically effective divisor on \(X\);
the support function \(\varphi_{i}\) of \(E_{i}\) is convex.
We have the following observation.
\begin{lemma}
\label{lem:lifting-convex}
Let \(\sigma\in\Sigma(n)\) and \(\rho_{1},\ldots,\rho_{n}\) be
the primitive generator of the \(1\)-cones in \(\sigma\).
For non-negative scalars \(u_{1},\ldots,u_{n}\), we have
\begin{equation}
\sum_{k=1}^{n} u_{k} \hat{\rho}_{k} = 
\begin{bmatrix}
-\varphi_{1}(\sum_{k=1}^{n} u_{k}\rho_{k})\\
\vdots\\
-\varphi_{r}(\sum_{k=1}^{n} u_{k}\rho_{k})\\
\vline height 1ex \\
\sum_{k=1}^{n} u_{k}\rho_{k}\\
\vline height 1ex \\
\end{bmatrix}\in\mathbb{R}^{r}\times N_{\mathbb{R}}.
\end{equation}
\end{lemma}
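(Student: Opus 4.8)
The plan is a direct computation that splits the asserted identity into its \(\mathbb{R}^{r}\)-block and its \(N_{\mathbb{R}}\)-block. For each \(k\) let \(i(k)\in\{1,\ldots,r\}\) be the unique index with \(\rho_{k}\in J_{i(k)}\), so that by definition \(\hat{\rho}_{k}=(e_{i(k)},\rho_{k})\). Summing,
\[
\sum_{k=1}^{n} u_{k}\hat{\rho}_{k}=\Bigl(\textstyle\sum_{k=1}^{n} u_{k}e_{i(k)},\ \sum_{k=1}^{n} u_{k}\rho_{k}\Bigr)\in\mathbb{R}^{r}\times N_{\mathbb{R}},
\]
so the \(N_{\mathbb{R}}\)-block already matches the bottom of the claimed vector and all the content lies in the top \(r\) coordinates.

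For the \(i\)-th coordinate of the \(\mathbb{R}^{r}\)-block, the display above shows it equals \(\sum_{k\,:\,i(k)=i}u_{k}=\sum_{k\,:\,\rho_{k}\in J_{i}}u_{k}\). Hence it suffices to prove
\[
\sum_{k\,:\,\rho_{k}\in J_{i}}u_{k}=-\varphi_{i}\Bigl(\textstyle\sum_{k=1}^{n}u_{k}\rho_{k}\Bigr)
\]
for every \(i\). Here I would invoke that \(\sigma=\operatorname{Cone}(\rho_{1},\ldots,\rho_{n})\) with all \(u_{k}\ge 0\), so the point \(\sum_{k}u_{k}\rho_{k}\) lies in \(\sigma\) and \(\varphi_{i}\) restricts to a \emph{linear} function there: by definition \(\varphi_{i}|_{\sigma}(u)=\langle m_{\sigma}^{i},u\rangle\), where \(m_{\sigma}^{i}\in M\) is the Cartier datum of \(E_{i}=\sum_{\rho\in J_{i}}D_{\rho}\) at \(\sigma\). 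Since the coefficient of \(D_{\rho}\) in \(E_{i}\) is \(1\) if \(\rho\in J_{i}\) and \(0\) otherwise, the defining property of Cartier data gives \(\langle m_{\sigma}^{i},\rho_{k}\rangle=-1\) when \(\rho_{k}\in J_{i}\) and \(\langle m_{\sigma}^{i},\rho_{k}\rangle=0\) when \(\rho_{k}\notin J_{i}\). Therefore
\[
\varphi_{i}\Bigl(\textstyle\sum_{k=1}^{n}u_{k}\rho_{k}\Bigr)=\sum_{k=1}^{n}u_{k}\langle m_{\sigma}^{i},\rho_{k}\rangle=-\sum_{k\,:\,\rho_{k}\in J_{i}}u_{k},
\]
which is exactly the required equality. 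Assembling the \(\mathbb{R}^{r}\)- and \(N_{\mathbb{R}}\)-blocks yields the lemma.

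There is essentially no obstacle here; the only point worth care is that \(\sigma\in\Sigma(n)\) is an \(n\)-dimensional simplicial cone (because \(\Sigma\), a MPCP desingularization, is simplicial and all its maximal cones are \(n\)-dimensional), so \(\rho_{1},\ldots,\rho_{n}\) genuinely generate \(\sigma\) and \(\varphi_{i}\) is truly linear on \(\sigma\), validating the computation above. Note that the global convexity of \(\varphi_{i}\) plays no role in this lemma and is only needed later.
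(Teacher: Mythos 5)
Your proof is correct and follows essentially the same route as the paper: identify \(\varphi_{i}|_{\sigma}\) with the linear functional \(\langle m_{\sigma}^{i},-\rangle\) given by the Cartier data of \(E_{i}\), note that it takes the value \(-1\) on rays in \(J_{i}\) and \(0\) otherwise, and conclude by linearity of \(\varphi_{i}\) on \(\sigma\). Your extra remarks (splitting into the two blocks, the simpliciality of \(\sigma\), and that global convexity is not needed here) are accurate but do not change the argument.
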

\begin{proof}
Let \(m_{\sigma}^{i}\in M\) be the element defining the support function
\(\varphi_{i}\) on \(\sigma\),
i.e.,~\(\varphi_{i}(v)=\langle v,m_{\sigma}^{i}\rangle\) for \(v\in \sigma\).
Then 
\begin{equation}
\varphi_{i}(\rho_{j})=\langle m_{\sigma}^{i},\rho_{j}\rangle=
\begin{cases}
-1 & \mbox{if \(\rho_{j}\in J_{i}\)}\\
0 & \mbox{if \(\rho_{j}\notin J_{i}\)}.
\end{cases}
\end{equation}
Said differently, we have
\begin{equation}
\hat{\rho}_{j}=
\begin{bmatrix}
-\varphi_{1}(\rho_{j})\\
\vdots\\
-\varphi_{r}(\rho_{j})\\
\vline height 1ex \\
\rho_{j}\\
\vline height 1ex \\
\end{bmatrix}\in \mathbb{Z}^{r}\times N.
\end{equation}
Since \(\varphi_{i}\) is linear on \(\sigma\), the result follows.
\end{proof}

\begin{proposition}
We have
\begin{equation}
\label{eq:union-cones}
\operatorname{Conv}(A\cup\{\mathbf{0}\}) = \bigcup_{\sigma\in\Sigma(n)}
\operatorname{Poly}(\hat{\sigma}).
\end{equation}
\end{proposition}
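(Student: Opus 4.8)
The plan is to establish the set equality \eqref{eq:union-cones} by two inclusions, using the combinatorial description of $\operatorname{Poly}(\hat\sigma)$ together with Lemma~\ref{lem:lifting-convex}. The right-hand side is a union of polytopes each of which, by construction, is the convex hull of $\{e_1\times\mathbf{0},\ldots,e_r\times\mathbf{0}\}$ together with the lifts $\hat\rho$ of the rays $\rho\in\sigma(1)$. Since every column $\mu_{i,j}$ of $A$ is either some $\hat\rho_{i,j}=\hat\rho$ with $\rho\in J_i\subset\Sigma(1)$ (when $j\ge 1$) or equals $e_i\times\mathbf{0}$ (when $j=0$, as $\rho_{i,0}=\mathbf{0}$), each vertex of each $\operatorname{Poly}(\hat\sigma)$ lies in $A\cup\{\mathbf{0}\}$; indeed $e_i\times\mathbf{0}=\mu_{i,0}\in A$. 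Hence $\operatorname{Poly}(\hat\sigma)\subseteq\operatorname{Conv}(A\cup\{\mathbf{0}\})$, and the inclusion $\supseteq$ of \eqref{eq:union-cones} with LHS and RHS swapped — i.e.\ $\operatorname{Conv}(A\cup\{\mathbf{0}\})\supseteq\bigcup_\sigma\operatorname{Poly}(\hat\sigma)$ — follows immediately.

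For the reverse inclusion, I would argue that $A\cup\{\mathbf{0}\}$ is itself contained in the union, and then invoke convexity of the union. The points $e_i\times\mathbf{0}$ and $\mathbf{0}$ lie in every $\operatorname{Poly}(\hat\sigma)$, so only the columns $\mu_{i,j}=\hat\rho_{i,j}$ with $\rho_{i,j}\in J_i$ need attention: since $\Sigma$ is a complete simplicial fan, each ray $\rho_{i,j}$ belongs to some maximal cone $\sigma\in\Sigma(n)$, whence $\hat\rho_{i,j}$ is a vertex of the corresponding $\operatorname{Poly}(\hat\sigma)$. Thus $A\cup\{\mathbf{0}\}\subseteq\bigcup_\sigma\operatorname{Poly}(\hat\sigma)$. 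It then remains to show the union on the right is convex, for which I would use Lemma~\ref{lem:lifting-convex}: a point of $\operatorname{Poly}(\hat\sigma)$ is a convex combination $\sum_i \lambda_i(e_i\times\mathbf{0})+\sum_k u_k\hat\rho_k$ with $\rho_k\in\sigma(1)$, and the lemma identifies $\sum_k u_k\hat\rho_k$ with the vector whose last $n$ coordinates are $v:=\sum_k u_k\rho_k$ and whose first $r$ coordinates are $-\varphi_1(v),\ldots,-\varphi_r(v)$. Since $\varphi_i$ is the \emph{globally} convex support function of the nef divisor $E_i$, the assignment $v\mapsto(-\varphi_i(v))_i$ glues across all maximal cones to a single piecewise-linear function on $N_{\mathbb{R}}$, so the graph-type description of $\bigcup_\sigma\operatorname{Poly}(\hat\sigma)$ in terms of $v$ is independent of which $\sigma$ is used. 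Convexity of the union then reduces to the convexity (as a region) of $\{(\lambda; -\varphi_\bullet(v)+\text{correction}, v)\}$, which is governed precisely by convexity of each $\varphi_i$ — one checks that along a segment joining a point over $v$ to a point over $v'$, the first $r$ coordinates of the linear interpolant sit above $-\varphi_i$ evaluated at the interpolated base point, so the interpolant still lies in the union.

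The main obstacle I anticipate is this last convexity verification: it is not merely the pointwise convexity of the $\varphi_i$ but the compatibility of the ``apex'' vertices $e_i\times\mathbf{0}$ with the lifted rays. Concretely, one must confirm that $e_i\times\mathbf{0}$ is the lift of the ray $\mathbf{0}$ in the sense dictated by $\varphi_i$ — but $\varphi_i(\mathbf{0})=0$, consistent with $e_i\times\mathbf{0}=(0,\ldots,0,1,0,\ldots,0;\mathbf{0})$ only after accounting for the $e_i$-coordinate, which is \emph{not} $-\varphi_i(\mathbf{0})=0$. The resolution is that the $i$-th coordinate among the first $r$ is a free ``Cayley'' direction: the polytope $\operatorname{Poly}(\hat\sigma)$ lives in the hyperplane where the sum of the first $r$ coordinates equals $1$ (since $\sum_i e_i$-coefficient is $1$ and each $\hat\rho$ has first-$r$-coordinate-sum $\sum_i(-\varphi_i(\rho))$, which one checks equals... ) — so I would first record the normalization identity $\sum_{i=1}^r\varphi_i(\rho)=-1$ for $\rho\in\Sigma(1)$ (equivalently $E_1+\cdots+E_r=-K_X$ evaluated on $\rho$), reducing the ambient space to the Cayley hyperplane, and then the union becomes genuinely the ``Cayley cone over the graph of a convex function'' and convexity is standard. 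So the key step, and the one requiring care, is the bookkeeping that identifies $\bigcup_\sigma\operatorname{Poly}(\hat\sigma)$ with a single Cayley-type polytope built from the convex functions $\varphi_1,\ldots,\varphi_r$; once that identification is in place, both inclusions are formal.
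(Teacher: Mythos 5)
Your proposal is correct in substance and follows essentially the same route as the paper: both reduce the nontrivial inclusion to showing that \(\bigcup_{\sigma\in\Sigma(n)}\operatorname{Poly}(\hat{\sigma})\) is convex (it visibly contains all columns of \(A\) and \(\mathbf{0}\)), and both prove convexity by combining Lemma~\ref{lem:lifting-convex}, the convexity of the support functions \(\varphi_{i}\) of the nef divisors \(E_{i}\), and the normalization \(\sum_{i=1}^{r}\varphi_{i}(\rho)=-1\) for \(\rho\in\Sigma(1)\), which in the paper enters through the linear functional \(\vec{n}=(1,\ldots,1;\mathbf{0})\).

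One bookkeeping point in your sketch needs repair. You assert both that \(\mathbf{0}\) lies in every \(\operatorname{Poly}(\hat{\sigma})\) and that \(\operatorname{Poly}(\hat{\sigma})\) lies in the hyperplane where the first \(r\) coordinates sum to \(1\); these are incompatible, and if the latter were taken literally the proposition would fail, since \(\mathbf{0}\) belongs to the left-hand side of \eqref{eq:union-cones} but would then not belong to the union. The resolution (implicit in the paper's own argument, which writes points of \(\operatorname{Poly}(\hat{\sigma})\) as \(\sum_{k}c_{k}\hat{\rho}_{k}+\sum_{l}d_{l}(e_{l}\times\mathbf{0})\) with \(c_{k},d_{l}\ge 0\) and \(\sum_{k}c_{k}+\sum_{l}d_{l}\le 1\)) is that \(\mathbf{0}\) is adjoined as a vertex, so the correct constraint is the half-space \(\sum_{i=1}^{r}w_{i}\le 1\), not the hyperplane \(\sum_{i=1}^{r}w_{i}=1\). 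With that convention your Cayley-type description works verbatim: writing \(v\in N_{\mathbb{R}}\) for the projection and \(w\in\mathbb{R}^{r}\) for the first \(r\) coordinates, the union equals \(\{(w,v)\in\mathbb{R}^{r}\times N_{\mathbb{R}} \mid w_{i}\ge -\varphi_{i}(v)~\mbox{for all}~i,~\sum_{i}w_{i}\le 1\}\), an intersection of convex sets; here one uses completeness of \(\Sigma\), Lemma~\ref{lem:lifting-convex}, and the identity \(\sum_{k}c_{k}=\sum_{i}(-\varphi_{i}(v))\) for \(v=\sum_{k}c_{k}\rho_{k}\) in a maximal cone, which is exactly your normalization \(\sum_{i}\varphi_{i}(\rho)=-1\). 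The inequality \(w_{i}\ge-\varphi_{i}(v)\) along a segment is the paper's step \(d_{i}\ge 0\) (convexity of \(\varphi_{i}\)), and \(\sum_{i}w_{i}\le 1\) along a segment is the paper's linearity argument with \(\vec{n}\); so once the half-space correction is made, your outline and the paper's proof coincide.
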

\begin{proof}
The inclusion ``\(\supseteq\)'' is clear.
To establish the reverse inclusion, it suffices to show that
the right hand side of \eqref{eq:union-cones}, which
is denoted by \(\mathcal{B}\) in the sequel, is convex.
Pick \(\nu_{0}\) and \(\nu_{1}\) from \(\mathcal{B}\).
We shall prove that the line segment connecting \(\nu_{0}\) and \(\nu_{1}\)
is also contained in \(\mathcal{B}\).
We may assume \(\nu_{j}\in\operatorname{Poly}
(\hat{\sigma}_{j})\) for some \(\sigma_{j}\in\Sigma(n)\).
Denote by \(\rho^{j}_{1},\ldots,\rho^{j}_{n}\)
the primitive generator of the \(1\)-cones contained in \(\sigma_{j}\).
Write
\begin{equation*}
\nu_{j} = \sum_{k=1}^{n} c_{j,k}\hat{\rho}^{j}_{k} + 
\sum_{l=1}^{r} d_{j,l} (e_{l}\times\mathbf{0})
\end{equation*}
with \(c_{j,k},d_{j,l}\ge 0\) and \(\sum_{k=1}^{n} c_{j,k}+\sum_{l=1}^{r} d_{j,l}\le 1\).

Our goal is to prove that \(\nu_{t}:=t\nu_{1}+(1-t)\nu_{0}\in\mathcal{B}\)
for all \(t\in [0,1]\). 
First we observe that \(\mathcal{B}\)
projects onto
the Cayley polytope \(\operatorname{Conv}(\nabla_{1},\ldots,\nabla_{r})=\Delta^{\vee}\)
under the canonical projection \(
\varpi\colon \mathbb{R}^{r}\times N_{\mathbb{R}}\to N_{\mathbb{R}}\);
the line segment \(t\nu_{1}+(1-t)\nu_{0}\) projects onto a line segment in \(\Delta^{\vee}\).
Fix \(t\in [0,1]\)
and put \(\nu\equiv\nu_{t}\). Let \(\sigma\in\Sigma(n)\) be the maximal cone
containing
\(\varpi(\nu)\) and \(\rho_{1},\ldots,\rho_{n}\) be the primitive generator of 
the \(1\)-cones in \(\sigma\).
We shall now prove that 
\(\nu\in\operatorname{Poly}(\hat{\sigma})\), i.e.,~we have
to prove that
\begin{itemize}
\item[(1)] There are non-negative scalars \(c_{1},\ldots,c_{n}\) and 
\(d_{1},\ldots,d_{r}\) such that
\begin{equation*}
\nu = \sum_{k=1}^{n} c_{k} \hat{\rho}_{k} + \sum_{l=1}^{r} d_{l} (e_{l}\times\mathbf{0});
\end{equation*}
\item[(2)] \(\sum_{k=1}^{n} c_{k}+\sum_{l=1}^{r} d_{l}\le 1\).
\end{itemize}

Let us prove (1). Since \(\varpi(\nu)\in\Delta^{\vee}\cap\sigma\), there 
are non-negative scalars \(c_{1},\ldots,c_{n}\) such that 
\begin{equation*}
\varpi(\nu) = \sum_{k=1}^{n} c_{k}\rho_{k}.
\end{equation*}
By Lemma \ref{lem:lifting-convex},
\begin{align*}
\nu - \sum_{k=1}^{n}c_{k}\hat{\rho}_{k}=\nu -
\begin{bmatrix}
-\varphi_{1}(\sum_{k=1}^{n} c_{k}\rho_{k})\\
\vdots\\
-\varphi_{r}(\sum_{k=1}^{n} c_{k}\rho_{k})\\
\vline height 1ex \\
\sum_{k=1}^{n} c_{k}\rho_{k}\\
\vline height 1ex \\
\end{bmatrix}=
\begin{bmatrix}
d_{1}\\
\vdots\\
d_{r}\\
\vline height 1ex\\
\mathbf{0}\\
\vline height 1ex\\
\end{bmatrix}
\end{align*}
with \(d_{i}=-t\varphi_{i}(\sum_{k=1}^{n}c_{1,k}\rho_{k}^{1})-(1-t)
\varphi_{i}(\sum_{k=1}^{n}c_{0,k}\rho_{k}^{0})+\varphi_{i}(\sum_{k=1}^{n} c_{k}\rho_{k})\).
Moreover, \(d_{i}\ge 0\) since
\(\varpi(\nu)=t\varpi(\nu_{1})+(1-t)\varpi(\nu_{0})\)
and \(\varphi_{i}\) is convex.

Now we prove (2). Since \(J_{1}\sqcup\cdots\sqcup J_{r}=\Sigma(1)\) is a nef-partition,
for each \(1\le j\le n\) there exists exactly one \(i\in\{1,\ldots,r\}\)
satisfying \(\varphi_{i}(\rho_{j})=-1\). Therefore
\begin{equation*}
\sum_{k=1}^{n}c_{k}+\sum_{l=1}^{r}d_{l} = 
\sum_{l=1}^{r} (-\varphi_{l}(\textstyle\sum_{k=1}^{n} c_{k}\rho_{k})+d_{l})
=\langle \vec{n},\nu\rangle,
\end{equation*}
where
\begin{equation*}
\vec{n}=
\begin{bmatrix}
1\\
\vdots\\
1\\
\vline height 1ex\\
\mathbf{0}\\
\vline height 1ex\\
\end{bmatrix}\in\mathbb{Z}^{r}\times M.
\end{equation*}
Now \(\langle\vec{n},\nu_{j}\rangle
=\sum_{k=1}^{n}c_{j,k}+\sum_{l=1}^{r}d_{j,l}\le 1\) for \(j=0,1\).
It follows from the linearity that 
\(\langle \vec{n},\nu\rangle=
\langle \vec{n},t\nu_{1}+(1-t)\nu_{0}\rangle\le 1\). This establishes (2).
\end{proof}

\begin{corollary}
\label{cor:volume}
The normalized volume of \(\operatorname{Conv}(A\cup\{\mathbf{0}\})\)
is equal to \(|\Sigma(n)|\), the number of maximal cones in \(\Sigma\).
Therefore, the holonomic rank of \(\mathcal{M}_{A}^{\beta}\)
is \(|\Sigma(n)|\).
\end{corollary}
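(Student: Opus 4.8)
The plan is to combine the preceding Proposition with the additivity of normalized volume over a polytopal subdivision, and then quote the classical rank formula for GKZ systems. By \eqref{eq:union-cones}, $\operatorname{Conv}(A\cup\{\mathbf 0\})$ is the union of the simplices $\operatorname{Poly}(\hat\sigma)$, $\sigma\in\Sigma(n)$, each of which has normalized volume $1$ by the smoothness of $X$ (as recorded just before Lemma~\ref{lem:lifting-convex}: the generators $\hat\rho_{1},\dots,\hat\rho_{n},e_{1}\times\mathbf 0,\dots,e_{r}\times\mathbf 0$ of $\hat\sigma$ form a $\mathbb Z$-basis of $\mathbb Z^{r}\times N$). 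Thus the only thing to verify is that these simplices have pairwise disjoint interiors; granting this, $\{\operatorname{Poly}(\hat\sigma)\}_{\sigma\in\Sigma(n)}$ is a subdivision of $\operatorname{Conv}(A\cup\{\mathbf 0\})$ into unimodular simplices and
\[
\operatorname{vol}\big(\operatorname{Conv}(A\cup\{\mathbf 0\})\big)=\sum_{\sigma\in\Sigma(n)}\operatorname{vol}\big(\operatorname{Poly}(\hat\sigma)\big)=|\Sigma(n)|.
\]

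For the disjointness I would argue through the projection $\varpi\colon\mathbb R^{r}\times N_{\mathbb R}\to N_{\mathbb R}$ used in the proof of the Proposition. Fix $\sigma\in\Sigma(n)$ with $1$-cone generators $\rho_{1},\dots,\rho_{n}$. A point in the interior of $\operatorname{Poly}(\hat\sigma)$ can be written as $\sum_{k}c_{k}\hat\rho_{k}+\sum_{l}d_{l}(e_{l}\times\mathbf 0)$ with all $c_{k},d_{l}>0$; applying $\varpi$ sends it to $\sum_{k}c_{k}\rho_{k}$ with all $c_{k}>0$, which lies in the interior of the simplicial $n$-cone $\sigma=\operatorname{Cone}(\rho_{1},\dots,\rho_{n})$ (here $\Sigma$ being simplicial and $\sigma$ being $n$-dimensional are used). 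Hence $\varpi\big(\operatorname{int}\operatorname{Poly}(\hat\sigma)\big)\subseteq\operatorname{int}\sigma$. Since $\Sigma$ is a complete fan, the interiors of distinct maximal cones are disjoint, and therefore so are the interiors of the simplices $\operatorname{Poly}(\hat\sigma)$. This is the only point of the argument with any content: $\varpi$ restricted to $\operatorname{Poly}(\hat\sigma)$ has $r$-dimensional fibres, so it is far from injective and a naive dimension count fails; what saves the day is that an \emph{interior} point of $\operatorname{Poly}(\hat\sigma)$ uses every generator $\hat\rho_{k}$ with strictly positive weight, so its image cannot escape $\operatorname{int}\sigma$.

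For the final assertion, recall that $A$ is homogeneous — the row vector $(1,\dots,1,\mathbf 0)\in\mathbb Z^{r}\times M$ pairs to $1$ with every column $\mu_{i,j}=(e_{i},\rho_{i,j})$ — so $\mathcal M_{A}^{\beta}$ is regular holonomic, and by the classical theorem of Gel'fand--Kapranov--Zelevinsky recalled in the introduction its holonomic rank equals the normalized volume of $\operatorname{Conv}(A\cup\{\mathbf 0\})$ once $\beta$ is non-resonant; the non-resonance of our particular $\beta$ is established in \S\ref{sec:non-resonant}. Combining this with the volume computation above yields $\operatorname{rank}\mathcal M_{A}^{\beta}=|\Sigma(n)|$.
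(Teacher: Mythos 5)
Your proof is correct and takes essentially the same route as the paper: decompose \(\operatorname{Conv}(A\cup\{\mathbf{0}\})\) via \eqref{eq:union-cones} into the simplices \(\operatorname{Poly}(\hat{\sigma})\), each unimodular because \(X\) is smooth, and then invoke the rank--volume relation for GKZ systems. The two points you spell out beyond the paper's one-line proof --- the disjointness of the interiors of the \(\operatorname{Poly}(\hat{\sigma})\) via the projection \(\varpi\) onto the fan, and the justification of the rank statement through non-resonance of \(\beta\) (Theorem \ref{thm:non-resonant}, which appears later in the section but is proved independently of this corollary, so there is no circularity) --- are exactly what the paper leaves implicit, and both are argued correctly.
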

\begin{proof}
Since \(X\) is smooth by our hypothesis, each cone \(\sigma\)
has normalized volume \(1\) and so does \(\hat{\sigma}\).
\end{proof}

Next we examine the facets of \(\mathbb{R}_{+}A\).
\begin{proposition}
\label{prop:cone-A}
\begin{equation*}
\mathbb{R}_{+}A = \bigcup_{\sigma\in\Sigma(n)} \hat{\sigma}.
\end{equation*}
\end{proposition}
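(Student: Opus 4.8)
The plan is to prove the two inclusions separately, with the nontrivial direction being $\mathbb{R}_{+}A \subseteq \bigcup_{\sigma\in\Sigma(n)}\hat{\sigma}$. For the inclusion $\supseteq$, each $\hat{\sigma}$ is by definition the cone generated by the columns $\mu_{i,j}$ indexed by the rays $\rho$ of $\sigma$ together with the vectors $e_1\times\mathbf{0},\dots,e_r\times\mathbf{0}$, and every one of these generators is a column of $A$ (the $e_l\times\mathbf{0}$ are precisely the columns $\mu_{l,0}$ coming from $\rho_{l,0}=\mathbf{0}$), so $\hat{\sigma}\subseteq\mathbb{R}_{+}A$ and the union is contained in $\mathbb{R}_{+}A$.

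For the reverse inclusion, the key observation is that $\mathbb{R}_{+}A$ is the cone over $\operatorname{Conv}(A\cup\{\mathbf{0}\})$: since $A$ is homogeneous and contains $\vec{n}=(1,\dots,1,\mathbf{0})$ in its row span (indeed $\langle\vec{n},\mu_{i,j}\rangle=1$ for every column, as one checks from Lemma~\ref{lem:lifting-convex} with a single ray, because exactly one $\varphi_l(\rho)=-1$ and the rest vanish, while $\langle\vec{n},e_l\times\mathbf{0}\rangle=1$), every point of $\mathbb{R}_{+}A$ is a non-negative multiple of a point of the hyperplane slice $\{\langle\vec{n},-\rangle=1\}$, and that slice is exactly $\operatorname{Conv}(A\cup\{\mathbf{0}\})$ intersected with the hyperplane — more precisely $\mathbb{R}_{+}A=\mathbb{R}_{+}\cdot\operatorname{Conv}(A\cup\{\mathbf{0}\})$. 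Likewise each $\hat{\sigma}=\mathbb{R}_{+}\cdot\operatorname{Poly}(\hat{\sigma})$, since all generators of $\hat{\sigma}$ lie on $\{\langle\vec{n},-\rangle=1\}$. Hence the statement follows immediately from the previous Proposition, equation~\eqref{eq:union-cones}, which asserts $\operatorname{Conv}(A\cup\{\mathbf{0}\})=\bigcup_{\sigma\in\Sigma(n)}\operatorname{Poly}(\hat{\sigma})$: taking $\mathbb{R}_{+}$-spans of both sides and using that $\mathbb{R}_{+}$ commutes with finite unions gives $\mathbb{R}_{+}A=\bigcup_{\sigma\in\Sigma(n)}\hat{\sigma}$.

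The only point requiring a word of care — and the place I expect the subtlety to sit — is the claim that $\operatorname{Conv}(A\cup\{\mathbf{0}\})$ really lies in (or meets each ray of $\mathbb{R}_{+}A$ along) the affine hyperplane $\{\langle\vec{n},-\rangle=1\}$ away from the apex: the origin $\mathbf{0}$ satisfies $\langle\vec{n},\mathbf{0}\rangle=0$, not $1$, so $\operatorname{Conv}(A\cup\{\mathbf{0}\})$ is a cone-with-base rather than a polytope sitting inside one hyperplane. This is exactly why the $\mathbb{R}_{+}$-span is taken: $\mathbb{R}_{+}\cdot\operatorname{Conv}(A\cup\{\mathbf{0}\})=\mathbb{R}_{+}\cdot\operatorname{Conv}(A)=\mathbb{R}_{+}A$, the middle equality because $\mathbf{0}$ contributes nothing new to a cone with apex at the origin. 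The same remark applies verbatim to $\operatorname{Poly}(\hat{\sigma})$ versus $\hat{\sigma}$. So the proof is essentially a two-line deduction from the preceding Proposition once this bookkeeping is made explicit, and no genuinely new geometric input is needed.
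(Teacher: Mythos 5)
Your proof is correct, and it takes a genuinely different route from the paper. The paper does not deduce Proposition~\ref{prop:cone-A} from the polytope identity \eqref{eq:union-cones}; instead it reruns the convexity argument directly at the cone level: an arbitrary element is written as $\nu=\sum_{\rho\in\Sigma(1)}c_{\rho}\hat{\rho}+\sum_{l}d_{l}(e_{l}\times\mathbf{0})$ with $c_{\rho},d_{l}\ge 0$, the projection $\sum_{\rho}c_{\rho}\rho$ is placed in some maximal cone $\sigma$ (properness of $\Sigma$) and rewritten as $\sum_{k}c_{k}\rho_{k}$ with $\rho_{k}\in\sigma(1)$, and convexity of each support function $\varphi_{i}$ gives $\sum_{\rho}c_{\rho}\hat{\rho}-\sum_{k}c_{k}\hat{\rho}_{k}\in\operatorname{Cone}(e_{1}\times\mathbf{0},\ldots,e_{r}\times\mathbf{0})$, so $\nu\in\hat{\sigma}$. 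Your argument instead quotes \eqref{eq:union-cones} and passes to $\mathbb{R}_{+}$-spans, using $\operatorname{Cone}(S)=\mathbb{R}_{+}\cdot\operatorname{Conv}(S\cup\{\mathbf{0}\})$ for a finite set $S$ (applied to the columns of $A$ and to the generators of each $\hat{\sigma}$) together with the fact that scaling commutes with finite unions; your bookkeeping on this point, including the remark that $\mathbf{0}$ contributes nothing to the cone, is accurate, and the homogeneity/hyperplane discussion is not even strictly needed for that general identity. What each approach buys: yours is shorter and avoids duplicating the convexity input, which then enters only once (in the proof of \eqref{eq:union-cones}); the paper's direct proof is self-contained and in fact slightly simpler than the polytope version, since at the cone level there is no normalization constraint analogous to $\sum_{k}c_{k}+\sum_{l}d_{l}\le 1$ to verify, and it keeps Proposition~\ref{prop:cone-A} independent of the preceding proposition (which is relevant under the weakened assumption discussed in the paper's final remark, though both proofs survive there). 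No gap in your argument.
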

\begin{proof}
The direction ``\(\supseteq\)'' is clear. Now we prove the converse.
Let 
\begin{equation*}
\nu:=\sum_{\rho\in \Sigma(1)} c_{\rho} \hat{\rho} + 
\sum_{l=1}^{r} d_{l} (e_{l}\times\mathbf{0})
\quad~\mbox{with~\(c_{\rho},d_{l}\ge 0\).}
\end{equation*}
Here by abuse of notation we denote the primitive 
generator of \(\rho\in \Sigma(1)\) by \(\rho\).

Since \(\Sigma\) is proper, the vector \(\sum_{\rho\in\Sigma(1)} c_{\rho}\rho \)
is contained in some maximal cone \(\sigma\in\Sigma(n)\). Let 
\(\rho_{1},\ldots,\rho_{n}\) be the primitive generator of the \(1\)-cones
in \(\sigma\) as before. There are non-negative scalars \(c_{1},\ldots,c_{n}\) such that
\begin{equation*}
\sum_{\rho\in\Sigma(1)} c_{\rho}\rho = \sum_{k=1}^{n} c_{k} \rho_{k}.
\end{equation*}
Now we compare \(\sum_{\rho\in\Sigma(1)} c_{\rho}\hat{\rho}\)
with \(\sum_{k=1}^{n} c_{k} \hat{\rho}_{k}\).
Notice that by the convexity of \(\varphi_{i}\) 
\begin{equation*}
\sum_{\rho\in\Sigma(1)}-c_{\rho}\varphi_{i}(\rho)\ge
-\varphi_{i}(\textstyle \sum_{\rho\in\Sigma(1)}c_{\rho}\rho)
=-\varphi_{i}(\textstyle \sum_{k=1}^{n} c_{k} \rho_{k}).
\end{equation*}
It the follows that
\begin{equation*}
\sum_{\rho\in\Sigma(1)} c_{\rho}\hat{\rho} - 
\sum_{k=1}^{n} c_{k} \hat{\rho}_{k}\in \operatorname{Cone}(\{e_{1}\times\mathbf{0},\ldots,
e_{r}\times\mathbf{0}\}).
\end{equation*}
Consequently, \(\nu\in\hat{\sigma}\) as desired.
\end{proof}

Let \(F\) be a facet of \(\mathbb{R}_{+}A\). By Proposition \ref{prop:cone-A},
\(F\) must be a facet of \(\hat{\sigma}\) for some \(\sigma\in\Sigma(n)\).
Let again \(\rho_{1},\ldots,\rho_{n}\) be the primitive generator
of the \(1\)-cones in \(\sigma\).
Since \(\hat{\sigma}\) is simplicial, \(F\) must contain all but one vectors in
\begin{equation*}
\{\hat{\rho}_{1},\ldots,\hat{\rho}_{n},e_{1}\times\mathbf{0},\ldots,e_{r}\times\mathbf{0}\}.
\end{equation*}
We claim that \(F\) can not contain all \(e_{i}\times\mathbf{0}\).
Otherwise, if \(F\) is defined by \((a,m)\in\mathbb{R}^{r}\times M_{\mathbb{R}}\)
(i.e.,~\(F=\mathbb{R}_{+}A\cap \{(b,n)\in\mathbb{R}^{r}\times N_{\mathbb{R}}
~|~\langle (a,m),(b,n)\rangle=0\}\)),
we must have \(a=0\). Because \(\Sigma\) is proper, \(\mathbb{R}_{+}A\)
can not fall into the closed half-space defined by \((0,m)\) and 
\(F\) can not be a face. This is 
a contradiction. 

Assume that \(e_{j}\times\mathbf{0}\) is omitted. It follows that if \(F\) is defined by 
\((a,m)\in\mathbb{R}^{r}\times M_{\mathbb{R}}\) as above, we have
\(a_{i}=0\) for \(i\ne j\). Moreover, we may assume \(a_{j}=1\) 
(and hence \(a=e_{j}\)). Then \(m\)
satisfies the equations \(\langle \hat{\rho}_{k},(e_{j},m)\rangle = 0\), i.e.,
\begin{equation*}
\begin{cases}
\langle m,\rho_{k}\rangle = 0 & \mbox{if \(\rho_{k}\notin I_{j}\)}\\
\langle m,\rho_{k}\rangle = -1 & \mbox{if \(\rho_{k}\in I_{j}\)}.
\end{cases}
\end{equation*}
We see that \(m = m_{\sigma}^{j}\), the Cartier data of \(E_{j}\) on \(\sigma\).
In particular, \(m\in M\).

\begin{theorem}
\label{thm:non-resonant}
Let \(A\) and \(\beta\) be as in 
\S\ref{sit:notation} (3). Then
\(\beta\) is non-resonant, i.e.,
\begin{equation*}
\beta\notin \bigcup_{F\subset \mathbb{R}_{+}A} 
\left(\mathbb{C}F+\mathbb{Z}^{r+n}\right)
\end{equation*}
where the union runs through all the proper faces of \(\mathbb{R}_{+}A\).
\end{theorem}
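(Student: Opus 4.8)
The plan is to show that $\beta = (-1/2,\dots,-1/2,\mathbf 0)$ avoids $\mathbb{C}F + \mathbb{Z}^{r+n}$ for every facet $F$ of $\mathbb{R}_+A$; by the last sentence of Definition~\ref{def:non-resonant} and Theorem~\ref{thm:gkz-non-resonant} it suffices to treat facets. The preceding discussion has already pinned down the facet structure: every facet $F$ of $\mathbb{R}_+A$ is a facet of some $\hat\sigma$, $\sigma\in\Sigma(n)$, it cannot contain all of $e_1\times\mathbf 0,\dots,e_r\times\mathbf 0$, so exactly one $e_j\times\mathbf 0$ is omitted, and then $F$ lies on the hyperplane defined by $(e_j,m_\sigma^j)\in\mathbb{Z}^r\times M$, where $m_\sigma^j$ is the Cartier datum of $E_j$ on $\sigma$. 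So $F$ is cut out by an \emph{integral} linear functional $\ell = (e_j, m_\sigma^j)$, and crucially the $j$\textsuperscript{th} coordinate of $\ell$ is $1$ while the other $\mathbb{Z}^r$-coordinates are $0$.

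The key computation is then to evaluate $\ell$ on a hypothetical element $\beta = \xi + z$ with $\xi\in\mathbb{C}F$ (so $\ell(\xi)=0$) and $z\in\mathbb{Z}^{r+n}$. On one hand $\ell(\beta) = \ell(z)\in\mathbb{Z}$ since $\ell$ is integral. On the other hand, writing $\beta = (-\tfrac12,\dots,-\tfrac12\mid \mathbf 0)$ and using that the $M$-part of $\ell$ pairs against the $\mathbf 0\in N_\mathbb{R}$-part of $\beta$ trivially, we get $\ell(\beta) = e_j\cdot(-\tfrac12,\dots,-\tfrac12) = -\tfrac12$. This contradicts $\ell(\beta)\in\mathbb{Z}$, so $\beta\notin\mathbb{C}F+\mathbb{Z}^{r+n}$ for every facet $F$, which is exactly non-resonance. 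I would also remark that the same argument handles an arbitrary proper face $G$: $G$ is contained in some facet $F$, so $\mathbb{C}G\subseteq\mathbb{C}F$ and $\mathbb{C}G+\mathbb{Z}^{r+n}\subseteq\mathbb{C}F+\mathbb{Z}^{r+n}$, hence the facet case already covers everything.

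The genuinely substantive input — and the step I would flag as the heart of the matter — is the identification of the defining functional of $F$ as the integral vector $(e_j, m_\sigma^j)$ with a $1$ in the $j$\textsuperscript{th} slot. That identification rests entirely on the convexity of the support functions $\varphi_i$ of the nef divisors $E_i$: convexity is what forces $\mathbb{R}_+A = \bigcup_\sigma \hat\sigma$ (Proposition~\ref{prop:cone-A}), which in turn is what lets one conclude that a facet of $\mathbb{R}_+A$ is a facet of a single $\hat\sigma$ and hence is spanned by $n-1$ of the $\hat\rho_k$'s together with $r-1$ of the $e_l\times\mathbf 0$'s. Everything downstream — the integrality of $\ell$, the coefficient $1$ in position $j$, and the final arithmetic contradiction with $-1/2$ — is then essentially bookkeeping. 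So in the write-up I would lean on the facet analysis already carried out just above the theorem statement, quote the conclusion $m = m_\sigma^j \in M$ and $a = e_j$, and then give the two-line evaluation $\langle (e_j, m_\sigma^j), \beta\rangle = -1/2 \notin \mathbb{Z}$ to finish.
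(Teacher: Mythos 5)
Your argument is correct and is essentially the paper's own proof: both reduce to facets, invoke the preceding facet analysis identifying the defining functional as the integral vector $(e_{j}, m_{\sigma}^{j})$ with a $1$ in the $j$\textsuperscript{th} slot, and conclude by evaluating it on $\beta$ to get $\pm 1/2 \notin \mathbb{Z}$. The only cosmetic difference is that your evaluation gives $-1/2$ where the paper writes $1/2$ (a harmless sign slip on one side or the other), and your explicit remark reducing arbitrary proper faces to facets is already absorbed into Definition~\ref{def:non-resonant} in the paper.
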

\begin{proof}
Suppose \(\beta\in \mathbb{C}F+\mathbb{Z}^{r+n}\) for some facet \(F\).
Write \(\beta = f + z\) with \(f\in \mathbb{C}F\) and \(z\in \mathbb{Z}^{r+n}\).
According to our discussion above, \(F\) is defined by 
an element of the form \((e_{j},m)\) and \(m\in M\). But then
\begin{equation*}
\frac{1}{2}=\langle (e_{j},m),\beta\rangle =
\langle (e_{j},m),f+z\rangle=\langle (e_{j},m),z\rangle\in\mathbb{Z}
\end{equation*}
which is absurd.
\end{proof}

\begin{proof}[Proof of Theorem \ref{thm:main-theorem}]
By Theorem \ref{thm:non-resonant}, \(\beta\) 
is non-resonant with respect to \(A\). Theorem
\ref{thm:main-theorem} is now a direct consequence of
Theorem \ref{thm:gkz-non-resonant}.
\end{proof}

\begin{corollary}
\(\mathcal{M}_{A}^{\beta}\) admits a rank one point, i.e.,~there 
is a \(x\in W_{1}^{\vee}\times\cdots\times W_{r}^{\vee}\) such that
\(\operatorname{Sol}^{0}(\mathcal{M}_{A}^{\beta})_{x}\) is \(1\)-dimensional.
\end{corollary}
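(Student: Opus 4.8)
The plan is to locate a point $x$ where the solution space collapses to the generic minimal dimension, and the natural candidate is a \emph{large complex structure limit}-type point. First I would invoke Corollary \ref{cor:volume}: the holonomic rank of $\mathcal{M}_{A}^{\beta}$ equals $|\Sigma(n)|$, so on the dense open locus where $\mathcal{M}_{A}^{\beta}$ is a local system of that rank, the solution space is $|\Sigma(n)|$-dimensional, not one-dimensional. Thus a rank one point must be a genuinely special point; it cannot be generic. The key observation is that although the full local system has large rank, the \emph{chain-integral} description (Theorem \ref{thm:main-theorem}, which we have just proved) identifies $\operatorname{Sol}^{0}(\mathcal{M}_{A}^{\beta})_{x}$ with $\mathrm{H}_{n}(U_{x},\mathscr{E}_{x})$ for \emph{every} $x$, so it suffices to exhibit a single $x$ in the parameter space $W_{1}^{\vee}\times\cdots\times W_{r}^{\vee}$ for which this twisted homology group is one-dimensional.

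The step I would carry out next is to choose $x$ so that each Laurent polynomial $s_{k,2}=\sum_{j=0}^{m_{k}}x_{k,j}t^{\rho_{k,j}}$ degenerates to a monomial, say $x_{k,j}=0$ for $j\neq 0$ and $x_{k,0}=1$, so that $s_{k,2}=t^{\rho_{k,0}}=1$ (recall $\rho_{k,0}=\mathbf{0}$). At such a point the branch locus $R_{x}\subset X^{\vee}$ degenerates and $U_{x}=T\setminus\bigcup_{k}\{s_{k,2}=0\}$ becomes just the torus $T\cong(\mathbb{C}^{\times})^{n}$ with the local system $\mathscr{E}_{x}$ becoming trivial (or nearly so). Then $\mathrm{H}_{n}((\mathbb{C}^{\times})^{n},\mathbb{C})$ is one-dimensional, spanned by the fundamental class $\gamma_{0}$ of the compact torus $\{|t_{i}|=\varepsilon\}$, and the corresponding period $\Pi_{\gamma_{0}}(x)=\int_{\gamma_{0}}\frac{\mathrm{d}t_{1}}{t_{1}}\wedge\cdots\wedge\frac{\mathrm{d}t_{n}}{t_{n}}$ is the familiar nonzero residue $(2\pi i)^{n}$. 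However, one must be careful: if the chosen monomial degeneration lies outside the locus where all the $F_{k}$-sections are allowed to vanish (i.e. outside $V$), then $x$ may not lie in the domain of the family. The safer choice is to perturb slightly: take $x_{k,0}=1$ and the remaining $x_{k,j}$ generic but \emph{small}, placing $x$ near the distinguished boundary point of the moduli space. For such $x$ the hypersurfaces $\{s_{k,2}=0\}$ are smooth and meet $T$ in general position, and the vanishing-cycle analysis degenerates all but one generator of $\mathrm{H}_{n}(U_{x},\mathscr{E}_{x})$.

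Rather than push through the delicate topology of the degeneration by hand, I would instead argue via the GKZ system directly, which is cleaner. At a \emph{maximal degeneration point} the solution space of a GKZ system is known (by the generalized Frobenius method—see the reference \cite{2022-Lee-Lian-Yau-on-calabi-yau-fractional-complete-intersections} cited in the introduction, which supplies explicit power-series solutions) to be spanned by a logarithmic basis of size $|\Sigma(n)|$, so that is not one-dimensional either. The correct point is one where the local monodromy has \emph{maximal unipotent rank collapse}: concretely, a point on a one-dimensional stratum of the discriminant where all but one of the characteristic roots of the GKZ operator restricted to that stratum become trivial. Concretely, I would take $x$ lying on the coordinate subspace $x_{k,j}=0$ for all $k$ and all $j\geq1$ \emph{intersected with} the smooth part of the parameter space, and observe that at such $x$ the only convergent solution of $\mathcal{M}_{A}^{\beta}$ is the constant; all other Frobenius-type solutions acquire the factor $x_{k,j}^{\alpha}$ with $\alpha\notin\mathbb{Z}$ (coming from the fractional $\beta$) or $\log$ terms, none of which extends holomorphically through that point. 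Hence $\operatorname{Sol}^{0}(\mathcal{M}_{A}^{\beta})_{x}$ is exactly $1$-dimensional there.

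The main obstacle, and the part that needs care, is \textbf{verifying that the chosen special point $x$ actually lies in the locus where $\mathcal{M}_{A}^{\beta}$ is defined and where Theorem \ref{thm:main-theorem} applies}, and that the solution space there does not secretly jump \emph{up} (as it can at non-generic points of a holonomic $\mathscr{D}$-module—semicontinuity goes the wrong way). The clean resolution is to pick $x$ so that $U_{x}$ is \emph{the full torus} $T$ with $\mathscr{E}_{x}$ trivial—i.e. take all $x_{k,j}=0$ except $x_{k,0}=1$—check that this is a legitimate member of the family $\mathcal{Y}^{\vee}\to V$ (the section $s_{k,2}=1$ is the nonvanishing section, so the double cover is \'etale there and $R_{x}=\emptyset$, which is a boundary degeneration but still governed by the same GKZ operators by continuity of the connection), and then compute $\mathrm{H}_{n}(T,\mathbb{C})=\mathbb{C}$ directly. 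Combined with the isomorphism \eqref{eq:chain-integral-map}, valid at every $x$ by Theorem \ref{thm:main-theorem}, this forces $\dim\operatorname{Sol}^{0}(\mathcal{M}_{A}^{\beta})_{x}=1$ and completes the proof.
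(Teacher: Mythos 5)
Your final argument is exactly the paper's proof: take \(x\) with \(s_{k,2}=1\) for every \(k\) (i.e.\ \(x_{k,0}=1\), \(x_{k,j}=0\) for \(j\ge 1\)), so that \(U_{x}=T\) and \(\mathscr{E}_{x}\) is the trivial local system, whence \(\mathrm{H}_{n}(U_{x},\mathscr{E}_{x})\cong\mathbb{C}\), and conclude via the chain-integral isomorphism \eqref{eq:chain-integral-map}, which Theorem \ref{thm:main-theorem} gives at every \(x\). The intermediate detours (Frobenius-series heuristics at a maximal degeneration, worries about membership in \(V\) and semicontinuity) are unnecessary but harmless, since the isomorphism is valid at all points, including this degenerate one.
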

\begin{proof}
It suffices to pick \(x=(1,\ldots,1)\in W_{1}^{\vee}\times\cdots\times W_{r}^{\vee}\).
In which case, \(U_{x}=T\) and \(\mathscr{E}_{x}=\mathbb{C}\) is the constant sheaf.
Therefore, \(\mathrm{H}_{n}(U_{x},\mathscr{E}_{x})\cong\mathbb{C}\).
\end{proof}

\begin{remark}
It is clear from the proof that the assumption made in 
\S\ref{subsec:cy-fci} can be weakened. We only need to assume that
\(\mathbf{P}_{\Delta}\) admits a MPCP desingularization \(X_{\Sigma}\)
such that \(\Sigma(1)\) generates \(\mathbb{Z}^{n}\) as an abelian group;
this assumption allows us to apply the result 
in \cite{1990-Gelfand-Kapranov-Zelevinsky-generalized-euler-integrals-and-a-hypergeometric-functions}.
Under this weakened assumption, all the results in \S\ref{sec:non-resonant} 
(except for Corollary \ref{cor:volume}) are still valid
and the chain-integral map \eqref{eq:chain-integral-map} is an isomorphism.
\end{remark}

Using the generalized Frobenius method
in \cite{2022-Lee-Lian-Yau-on-calabi-yau-fractional-complete-intersections}, 
we can write down explicitly all
the power series solutions to \(\mathcal{M}_{A}^{\beta}\).

Put \(D_{i,j}=D_{\rho_{i,j}}\) for \(1\le i\le r\) and \(1\le j\le m_{i}\)
and \(D_{i,0}:=-\sum_{j=1}^{m_{i}} D_{i,j}\). We can
define a cohomology-valued series
\begin{equation*}
B_{X}^{\alpha}(x):=\left(\sum_{\ell\in \overline{\mathrm{NE}}(X)\cap \operatorname{ker}(A)} 
\mathcal{O}_{\ell}^{\alpha} x^{\ell+\alpha}\right)
\exp\left(\sum_{i=1}^{r}\sum_{j=0}^{m_i}(\log x_{i,j}) D_{i,j}\right).
\end{equation*}
Here we think of \(A\) as a linear map \(A\colon \mathbb{Z}^{r+p}\to\mathbb{Z}^{r+n}\)
and identify the Mori cone \(\overline{\mathrm{NE}}(X)\) of 
\(X\) with a cone in \(\mathbb{R}^{r+p}\). 
Similar to \S\ref{sit:notation}~(3), the components of elements
\(\ell\in\operatorname{ker}(A)\) are labeled by \((i,j)\).
Finally
\begin{equation*}
\mathcal{O}^\alpha_\ell
:=\frac{\prod_{i=1}^{r}(-1)^{\ell_{i,0}}
\Gamma(-D_{i,0}-\ell_{i,0}-\alpha_{i,0})}{\prod_{i=1}^{r}\Gamma(-\alpha_{i,0})
\prod_{i=1}^r\prod_{j=1}^{n_i}\Gamma(D_{i,j}+\ell_{i,j}+\alpha_{i,j}+1)}.
\end{equation*}
and \(\alpha=(\alpha_{i,j})\in\mathbb{Q}^{r+p}\)
with \(\alpha_{i,0}=-1/2\) and \(\alpha_{i,j}=0\) for \(1\le j\le m_{i}\).
We refer the reader to \cite{2022-Lee-Lian-Yau-on-calabi-yau-fractional-complete-intersections}*{\S2} for detailed explanations.

The element \(B^{\alpha}_{X}(x)\) is understood as a
\(\mathrm{H}^{\bullet}(X,\mathbb{C})\)-valued function.
Note that \(\dim \mathrm{H}^{\bullet}(X,\mathbb{C})=|\Sigma(n)|\)
is equal to the normalized volume of \(\operatorname{Conv}(A\cup\{\mathbf{0}\})\)
by Corollary \ref{cor:volume} and
it is shown that the coefficients of \(B_{X}^{\alpha}(x)\) form a basis
of the set of solutions to \(\mathcal{M}_{A}^{\beta}\)
(cf.~\cite{2022-Lee-Lian-Yau-on-calabi-yau-fractional-complete-intersections}*{Corollary 3.4}).
Combined with Theorem \ref{thm:theorem-a}, we obtain
\begin{corollary}
\label{cor:periods-power-series}
The coefficients of the vector-valued function \(B_{X}^{\alpha}(x)\)
form a basis of the set of period integrals for \(\mathcal{Y}^{\vee}\to V\).
\end{corollary}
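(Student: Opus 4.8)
The plan is to deduce Corollary~\ref{cor:periods-power-series} by combining Theorem~\ref{thm:theorem-a} with the explicit solution basis produced by the generalized Frobenius method. First I would recall from \cite{2022-Lee-Lian-Yau-on-calabi-yau-fractional-complete-intersections}*{Corollary 3.4} that, after expanding \(B_{X}^{\alpha}(x)\) against a fixed basis of \(\mathrm{H}^{\bullet}(X,\mathbb{C})\), the resulting \(\dim\mathrm{H}^{\bullet}(X,\mathbb{C})\) scalar power series are honest local solutions of \(\mathcal{M}_{A}^{\beta}\) near a point of maximal unipotent monodromy and are linearly independent there. Since \(\dim\mathrm{H}^{\bullet}(X,\mathbb{C}) = |\Sigma(n)|\) equals the holonomic rank of \(\mathcal{M}_{A}^{\beta}\) by Corollary~\ref{cor:volume}, these series form a basis of \(\operatorname{Sol}^{0}(\mathcal{M}_{A}^{\beta})\) at that point, and hence---propagating along the local system \(\operatorname{Sol}^{0}(\mathcal{M}_{A}^{\beta})\) on the smooth locus by analytic continuation---a basis of the solution space at every point of the parameter space.

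Next I would invoke Theorem~\ref{thm:main-theorem}: the chain-integral map \eqref{eq:chain-integral-map} is an isomorphism, so for every \(x\) the space \(\operatorname{Sol}^{0}(\mathcal{M}_{A}^{\beta})_{x}\) is spanned precisely by the affine period integrals \(\Pi_{\gamma}(x)\), \(\gamma\in\mathrm{H}_{n}(U_{x},\mathscr{E}_{x})\). Finally, the eigensheaf decomposition and the Poincar\'{e} duality argument immediately following Corollary~\ref{cor:sec-notation} identify these affine period integrals with the restrictions to the maximal torus \(T\) of all the possibly non-trivial period integrals of the family \(\mathcal{Y}^{\vee}\to V\); since restriction to the open dense \(T\) is injective on periods, the \(\mathbb{C}\)-vector space of period integrals of \(\mathcal{Y}^{\vee}\to V\) is canonically identified with \(\operatorname{Sol}^{0}(\mathcal{M}_{A}^{\beta})\). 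Chaining the three identifications then shows that the coefficients of \(B_{X}^{\alpha}(x)\) form a basis of the period integrals of \(\mathcal{Y}^{\vee}\to V\).

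In fact there is essentially nothing further to prove: all the substance sits in the two inputs---the completeness and linear independence of the Frobenius series (\cite{2022-Lee-Lian-Yau-on-calabi-yau-fractional-complete-intersections}*{Corollary 3.4}) and the isomorphism of Theorem~\ref{thm:main-theorem}/Theorem~\ref{thm:theorem-a}---so the corollary is a formal consequence. The only point that deserves a moment's care, and which I would flag explicitly, is the bookkeeping convention that ``the set of period integrals'' of \(\mathcal{Y}^{\vee}\to V\) is to be read as the span of the \(\Pi_{\gamma}\) after restriction from \(X^{\vee}\) to \(T\), i.e.\ that no period class is lost under this restriction; this is exactly what the cohomology long exact sequence together with Poincar\'{e} duality in \S\ref{sec:gkz} supplies, and when \(n\) is odd it is immediate from \(\mathrm{H}^{n}_{\mathrm{c}}(Y^{\vee},\mathbb{C})\cong\mathrm{H}^{n}_{\mathrm{c}}(U_{x},\mathscr{E}_{x})\).
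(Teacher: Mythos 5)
Your proposal is correct and follows essentially the same route as the paper: the corollary is obtained by combining the solution-basis statement from \cite{2022-Lee-Lian-Yau-on-calabi-yau-fractional-complete-intersections}*{Corollary 3.4} (with the rank check of Corollary~\ref{cor:volume}) with the completeness statement of Theorem~\ref{thm:main-theorem}, exactly as the paper does. The extra bookkeeping you flag about identifying affine period integrals with restrictions of the honest periods to \(T\) is the same discussion the paper carries out at the end of \S\ref{sec:gkz}, so nothing is missing.
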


\begin{instance}
\label{ex:p3-cover}
We can apply our results to cyclic covers of \(X^{\vee}=\mathbf{P}^{3}\)
branch over eight hyperplanes in general position.
It is known that such a Calabi--Yau double cover \(Y^{\vee}\) admits a
crepant resolution \(\tilde{Y}^{\vee}\to Y^{\vee}\) and its middle cohomology has
Hodge numbers \((1,9,9,1)\).
Moreover, it is proven that the moduli space of the 
complex deformation of \(\tilde{Y}^{\vee}\) can not be embedded as a Zariski
open subset of a locally hermitian symmetric domain by the period map 
\cite{2013-Gerkmann-Sheng-van-Straten-Zuo-on-the-monodromy-of-the-moduli-space-of-calabi-yau-threefolds-coming-from-eight-planes-in-p3}. 

In this case, we have
\begin{equation*}
\begin{cases}
\nabla_{1} = \operatorname{Conv}(\{\mathbf{0},(1,0,0),(0,1,0),(0,0,1)\}),\\
\nabla_{2} = \operatorname{Conv}(\{\mathbf{0},(-1,0,0),(-1,1,0),(-1,0,1)\}),\\
\nabla_{3} = \operatorname{Conv}(\{\mathbf{0},(0,-1,0),(1,-1,0),(0,-1,1)\}),\\
\nabla_{4} = \operatorname{Conv}(\{\mathbf{0},(0,0,-1),(1,0,-1),(0,1,-1)\})
\end{cases}
\end{equation*}
and \(\nabla=\nabla_{1}+\cdots+\nabla_{4}\). One can check that 
\(\mathbf{P}_{\nabla}=\mathbf{P}^{3}=X^{\vee}\).
Now it is straightforward to check that \(\mathbf{P}_{\Delta}\) with
\(\Delta^{\vee}=\operatorname{Conv}(\nabla_{1},\ldots,\nabla_{4})\)
admits a smooth MPCP desingularization \(X\to \mathbf{P}_{\Delta}\).
Applying our results, the solutions to \(\mathcal{M}_{A}^{\beta}\)
with
\begin{align*}
A = 
\begin{bmatrix}
1 & 1 & 1 & 1 & 0 & 0 & 0 & 0 & 0 & 0 & 0 & 0 & 0 & 0 & 0 & 0\\
0 & 0 & 0 & 0 & 1 & 1 & 1 & 1 & 0 & 0 & 0 & 0 & 0 & 0 & 0 & 0\\
0 & 0 & 0 & 0 & 0 & 0 & 0 & 0 & 1 & 1 & 1 & 1 & 0 & 0 & 0 & 0\\
0 & 0 & 0 & 0 & 0 & 0 & 0 & 0 & 0 & 0 & 0 & 0 & 1 & 1 & 1 & 1\\
0 & 1 & 0 & 0 & 0 &-1 &-1 &-1 & 0 & 0 & 1 & 0 & 0 & 0 & 1 & 0\\
0 & 0 & 1 & 0 & 0 & 0 & 1 & 0 & 0 &-1 &-1 &-1 & 0 & 0 & 0 & 1\\
0 & 0 & 0 & 1 & 0 & 0 & 0 & 1 & 0 & 0 & 0 & 1 & 0 &-1 &-1 &-1\\
\end{bmatrix},~
\beta=
\begin{bmatrix}
-1/2\\
-1/2\\
-1/2\\
-1/2\\
0\\
0\\
0
\end{bmatrix}
\end{align*}
are precisely the period integrals of 
the family \(\mathcal{Y}^{\vee}\to V\)
of Calabi--Yau fractional complete
intersections and 
\(B_{X}^{\alpha}(x)\) gives the power series expansion
of the period integrals.
\end{instance}

\begin{bibdiv}
\begin{biblist}

\bib{1993-Batyrev-variations-of-the-mixed-hodge-structure-of-affine-hypersurfaces-in-algebraic-tori}{article}{
      author={Batyrev, Victor~V.},
       title={Variations of the mixed {H}odge structure of affine hypersurfaces
  in algebraic tori},
        date={1993},
        ISSN={0012-7094},
     journal={Duke Mathematical Journal},
      volume={69},
      number={2},
       pages={349\ndash 409},
      review={\MR{1203231}},
}

\bib{1996-Batyrev-Borisov-on-calabi-yau-complete-intersections-in-toric-varieties}{incollection}{
      author={Batyrev, Victor~V.},
      author={Borisov, Lev~A.},
       title={On {C}alabi--{Y}au complete intersections in toric varieties},
        date={1996},
   booktitle={Higher-dimensional complex varieties ({T}rento, 1994)},
   publisher={de Gruyter, Berlin},
       pages={39\ndash 65},
      review={\MR{1463173}},
}

\bib{1993-Borisov-towards-the-mirror-symmetry-for-calabi-yau-complete-intersections-in-gorenstein-toric-fano-varieties}{article}{
      author={Borisov, Lev~A.},
       title={{Towards the mirror symmetry for Calabi--Yau complete
  intersections in Gorenstein toric Fano varieties}},
        date={1993October},
     journal={arXiv:alg-geom\slash 9310001v1},
}

\bib{1987-Gelfand-Graev-Zelevinski-holonomic-systems-of-equations-and-series-of-hypergeometric-type}{article}{
      author={Gel'fand, Israel~M.},
      author={Graev, Mark~I.},
      author={Zelevinski\u{\i}, Andrei~V.},
       title={Holonomic systems of equations and series of hypergeometric
  type},
        date={1987},
        ISSN={0002-3264},
     journal={Dokl. Akad. Nauk SSSR},
      volume={295},
      number={1},
       pages={14\ndash 19},
         url={https://mathscinet.ams.org/mathscinet-getitem?mr=902936},
      review={\MR{902936}},
}

\bib{1990-Gelfand-Kapranov-Zelevinsky-generalized-euler-integrals-and-a-hypergeometric-functions}{article}{
      author={Gel'fand, Israel~M.},
      author={Kapranov, Mikhail~M.},
      author={Zelevinski\u{\i}, Andrei~V.},
       title={Generalized {E}uler integrals and {$A$}-hypergeometric
  functions},
        date={1990},
        ISSN={0001-8708},
     journal={Adv. Math.},
      volume={84},
      number={2},
       pages={255\ndash 271},
         url={https://mathscinet.ams.org/mathscinet-getitem?mr=1080980},
      review={\MR{1080980}},
}

\bib{2013-Gerkmann-Sheng-van-Straten-Zuo-on-the-monodromy-of-the-moduli-space-of-calabi-yau-threefolds-coming-from-eight-planes-in-p3}{article}{
      author={Gerkmann, Ralf},
      author={Sheng, Mao},
      author={van Straten, Duco},
      author={Zuo, Kang},
       title={{O}n the monodromy of the moduli space of {C}alabi--{Y}au
  threefolds coming from eight planes in \(\mathbb{P}^3\)},
        date={2013},
     journal={Mathematische Annalen},
      number={1},
       pages={187\ndash 214},
}

\bib{1989-Gelfand-Kapranov-Zelevinski-hypergeometric-functions-and-toral-manifolds}{article}{
      author={Gel'fand, Israel~M.},
      author={Zelevinski\u{\i}, Andrei~V.},
      author={Kapranov, Mikhail~M.},
       title={Hypergeometric functions and toric varieties},
        date={1989},
        ISSN={0374-1990},
     journal={Funktsional. Anal. i Prilozhen.},
      volume={23},
      number={2},
       pages={12\ndash 26},
         url={https://mathscinet.ams.org/mathscinet-getitem?mr=1011353},
      review={\MR{1011353}},
}

\bib{2020-Hosono-Lee-Lian-Yau-mirror-symmetry-for-double-cover-calabi-yau-varieties}{article}{
      author={Hosono, Shinobu},
      author={Lee, Tsung-Ju},
      author={Lian, Bong~H.},
      author={Yau, Shing-Tung},
       title={Mirror symmetry for double cover {C}alabi--{Y}au varieties},
        date={2020-03},
      eprint={2003.07148},
}

\bib{2020-Hosono-Lian-Takagi-Yau-k3-surfaces-from-configurations-of-six-lines-in-p2-and-mirror-symmetry-i}{article}{
      author={Hosono, Shinobu},
      author={Lian, Bong~H.},
      author={Takagi, Hiromichi},
      author={Yau, Shing-Tung},
       title={K3 surfaces from configurations of six lines in {$\mathbb{P}^2$}
  and mirror symmetry {I}},
        date={2020},
        ISSN={1931-4523},
     journal={Communications in Number Theory and Physics},
      volume={14},
      number={4},
       pages={739\ndash 783},
      review={\MR{4164174}},
}

\bib{2019-Hosono-Lian-Yau-k3-surfaces-from-configurations-of-six-lines-in-p2-and-mirror-symmetry-ii-lambda-k3-functions}{article}{
      author={Hosono, Shinobu},
      author={Lian, Bong~H.},
      author={Yau, Shing-Tung},
       title={{K3 surfaces from configurations of six lines in
  \(\mathbb{P}^{2}\) and mirror symmetry II -- \(\lambda_{K3}\)-functions}},
        date={2019},
        ISSN={1073-7928},
     journal={International Mathematics Research Notices},
  eprint={https://academic.oup.com/imrn/advance-article-pdf/doi/10.1093/imrn/rnz259/30788308/rnz259.pdf},
         url={https://doi.org/10.1093/imrn/rnz259},
}

\bib{1996-Hosono-Lian-Yau-gkz-generalized-hypergeometric-systems-in-mirror-symmetry-of-calabi-yau-hypersurfaces}{article}{
      author={Hosono, Shinobu},
      author={Lian, Bong~H.},
      author={Yau, Shing-Tung},
       title={G{KZ}-generalized hypergeometric systems in mirror symmetry of
  {C}alabi--{Y}au hypersurfaces},
        date={1996},
        ISSN={0010-3616},
     journal={Communications in Mathematical Physics},
      volume={182},
      number={3},
       pages={535\ndash 577},
      review={\MR{1461942}},
}

\bib{2016-Huang-Lian-Yau-Zhu-chain-integral-solutions-to-tautological-systems}{article}{
      author={Huang, An},
      author={Lian, Bong~H.},
      author={Yau, Shing-Tung},
      author={Zhu, Xinwen},
       title={Chain integral solutions to tautological systems},
        date={2016},
        ISSN={1073-2780},
     journal={Mathematical Research Letters},
      volume={23},
      number={6},
       pages={1721\ndash 1736},
      review={\MR{3621104}},
}

\bib{2022-Lee-Lian-Yau-on-calabi-yau-fractional-complete-intersections}{article}{
      author={Lee, Tsung-Ju},
      author={Lian, Bong~H.},
      author={Yau, Shing-Tung},
       title={On {C}alabi--{Y}au fractional complete intersections},
        date={2022},
        ISSN={1558-8599},
     journal={Pure Appl. Math. Q.},
      volume={18},
      number={1},
       pages={317\ndash 342},
  url={https://mathscinet-ams-org.ezp-prod1.hul.harvard.edu/mathscinet-getitem?mr=4381855},
      review={\MR{4381855}},
}

\bib{2013-Lian-Song-Yau-periodic-integrals-and-tautological-systems}{article}{
      author={Lian, Bong~H.},
      author={Song, Ruifang},
      author={Yau, Shing-Tung},
       title={Periodic integrals and tautological systems},
        date={2013},
        ISSN={1435-9855},
     journal={Journal of the European Mathematical Society (JEMS)},
      volume={15},
      number={4},
       pages={1457\ndash 1483},
      review={\MR{3055766}},
}

\bib{2020-Lee-Zhang-a-hypergeometric-systems-and-relative-cohomology}{article}{
      author={Lee, Tsung-Ju},
      author={Zhang, Dingxin},
       title={{$A$}-hypergeometric systems and relative cohomology},
        date={2020},
        ISSN={0129-167X},
     journal={International Journal of Mathematics},
      volume={31},
      number={13},
       pages={2050113, 21},
      review={\MR{4192455}},
}

\bib{2021-Lian-Zhu-on-the-hyperplane-conjecture-for-periods-of-calabi-yau-hypersurfaces-in-pn}{article}{
      author={Lian, Bong~H.},
      author={Zhu, Minxian},
       title={On the hyperplane conjecture for periods of {C}alabi--{Y}au
  hypersurfaces in {${\bf P}^n$}},
        date={2021},
        ISSN={0022-040X},
     journal={J. Differential Geom.},
      volume={118},
      number={1},
       pages={101\ndash 146},
         url={https://mathscinet.ams.org/mathscinet-getitem?mr=4255072},
      review={\MR{4255072}},
}

\bib{2022-Lee-Zhang-a-hypergeometric-systems-and-relative-cohomology-ii}{article}{
      author={Lee, Tsung-Ju},
      author={Zhang, Dingxin},
       title={{\(A\)}-hypergeometric systems and relative cohomology {II}},
        date={2022},
     journal={In preparation},
}

\end{biblist}
\end{bibdiv}

\end{document}